\newtheorem{theorem}{Theorem}[section]
\newtheorem{lemma}{Lemma}[section]
\newtheorem{proposition}{Proposition}[section]
\newtheorem{remark}{Remark}[section]
\newenvironment{proof}{{\noindent \bf Proof:}}{\hfill$\Box$\medskip}
\definecolor{lred}{rgb}{1,0.8,0.8}
\definecolor{lblue}{rgb}{0.8,0.8,1}
\definecolor{dred}{rgb}{0.6,0,0}
\definecolor{dblue}{rgb}{0,0,0.5}
\definecolor{dgreen}{rgb}{0,0.5,0.5}
 \title{Weighted iteration complexity of the sPADMM on the KKT residuals for convex composite optimization}
\author{Li Shen\footnote{Department of Mathematics, South China University of Technology, Guangzhou, 510641, China (shen.li@mail.scut.edu.cn).}
 \ \ {\rm and}\ \ Shaohua Pan\footnote{Corresponding author. Department of Mathematics, South China University of Technology, Tianhe District of Guangzhou City, China (shhpan@scut.edu.cn).}}
 \date{April 18, 2016}
\begin{document}

 \maketitle

 \begin{abstract}

  In this paper we establish an $\mathcal{O}({1}/{k})$ weighted iteration complexity on
  the KKT residuals yielded by the sPADMM (semi-proximal alternating direction method of multiplier)
  for the convex composite optimization problem. This result, which is derived
  with the help of a novel generalized HPE (hybrid proximal extra-gradient)
  iteration formula, first fills the gap on the ergodic iteration complexity of
  the classic ADMM with a large step-size and its many proximal variants.

  \bigskip
  \noindent
  {\bf Keywords:} sPADMM, ergodic iteration complexity, convex composite optimization
 \end{abstract}

%------------------------------------------------------------------------------------------Section 1
 \section{Introduction}\label{sec1}

  Let $\mathbb{Y},\mathbb{Z}$ and $\mathbb{X}$ be three real finite-dimensional Euclidean spaces
  which are equipped with an inner product $\langle\cdot,\cdot\rangle$ and its induced norm $\|\cdot\|$.
  We shall study the weighted iteration complexity of the ADMM for the following
  convex composite optimization problem
 \begin{equation}\label{prob}
  \min_{y\in\mathbb{Y},z\in\mathbb{Z}}\Big\{\vartheta(y)+f(y)+\varphi(z)+g(z)\!:\ \mathcal{A}^*y+ \mathcal{B}^*z =c\Big\},
 \end{equation}
 where $\vartheta\!:\mathbb{Y}\to\!(-\infty,+\infty]$ and $\varphi\!:\mathbb{Z}\to\!(-\infty,+\infty]$
 are two closed proper convex functions, $f\!:\mathbb{Y}\to\!(-\infty,+\infty)$ and
 $g\!:\mathbb{Z}\to\!(-\infty,+\infty)$ are two continuously differentiable convex functions,
 $\mathcal{A}^*\!: \mathbb{Y}\to\mathbb{X}$ and $\mathcal{B}^*\!: \mathbb{Z}\to\mathbb{X}$ are
 the adjoints of the linear operators $\mathcal{A}\!: \mathbb{X}\to\mathbb{Y}$
 and $\mathcal{B}\!: \mathbb{X}\to\mathbb{Z}$, respectively, and $c\in\mathbb{X}$ is a given point.
 A typical example of \eqref{prob} is the Lagrange dual of the convex
 composite quadratic semidefinite programming (see \cite{HSZ15}):
 \begin{align}\label{quadratic-DSDP}
  &\min\ \delta_{\mathbb{S}_{+}^p}(s)-\langle b,y\rangle+\frac{1}{2}\langle w,\mathcal{Q}w\rangle+\phi^*(-z)\nonumber\\
  &\  {\rm s.t.}\ \ s+\mathcal{A}^*y-\mathcal{Q}w+z=c,\ w\in\mathcal{W}
 \end{align}
 where $\mathbb{S}_{+}^p$ is the cone consisting of all $p\times p$ positive semidefinite matrices
 in $\mathbb{S}^p$, the space of all $p\times p$ real symmetric matrices,
 $\delta_{\mathbb{S}_{+}^p}(\cdot)$ is the indicator function over $\mathbb{S}_{+}^p$,
 $\mathcal{Q}\!:\mathbb{S}^p\to\mathbb{S}^p$ is a self-adjoint positive semidefinite linear operator,
 $\phi\!:\mathbb{S}^p\to(-\infty,+\infty]$ is a closed proper convex function,
 $\mathcal{A}\!:\mathbb{S}^p\to\mathbb{R}^m$ is a linear operator,
 and $\mathcal{W}$ is any linear subspace in $\mathbb{S}^p$ containing the range of $\mathcal{Q}$.
 Clearly, problem \eqref{quadratic-DSDP} has the form of \eqref{prob} when viewing
 $(s,y)\in\mathbb{S}^p\times\mathbb{R}^m$ and $(w,z)\in\mathbb{S}^p\times\mathbb{S}^p$
 in \eqref{quadratic-DSDP} as the variables $y$ and $z$ in \eqref{prob}, respectively.
 For the applications of the convex composite quadratic semidefinite programming,
 the interested readers may refer to \cite{AKW99,Higham02,QiS06,Toh08}.
 Another interesting example of \eqref{prob} is the Lagrange dual of the nuclear semi-norm
 penalized least squares model which was proposed in \cite{MPS15} for resolving
 the matrix completion with fixed basis coefficients:
 \begin{align}\label{quadratic-DSDP}
  &\min\ \delta_{\mathcal{K}}(-Z)+\frac{1}{2}\|\xi\|^2+\langle d,\xi\rangle-\langle b,y\rangle+\delta_{\mathbb{B}_{\rho}}(S)\nonumber\\
  &\  {\rm s.t.}\ \ Z+\mathcal{B}^*\xi+S+\mathcal{A}^*y=-\rho C
 \end{align}
 where $\delta_{\mathcal{K}}(\cdot)$ and $\delta_{\mathbb{B}_{\rho}}(\cdot)$ are
 the indicator functions over the polyhedral cone $K$ and the ball of spectral norm
 $\mathbb{B}_{\rho}=\{X\in\mathbb{R}^{m\times n}\ |\ \|X\|\le \rho\}$,
 $\mathcal{A}^*\!:\mathbb{R}^p\to\mathbb{R}^{m\times n}$
 and $\mathcal{B}^*\!:\mathbb{R}^m\to\mathbb{R}^{m\times n}$ are two given linear mappings,
 and $d\in\mathbb{R}^p, b\in\mathbb{R}^m$ and $C\in\mathbb{R}^{m\times n}$ are given data.

 \medskip

 The classic ADMM was originally proposed by Glowinski and Marroco \cite{GM75} and
 Gabay and Mercier \cite{GM76}, and its design was much inspired by
 Rockafellar's works on proximal point algorithms for the general maximal monotone
 inclusion problems \cite{Roc76a,Roc76b}. The readers may refer to Glowinski \cite{G14}
 for a note on the historical development of the classic ADMM, to Gabay and Mercier
 \cite{GM76} and Fortin and Glowinski \cite{GF83} for its convergence analysis
 under certain settings, and to Eckstein and Yao \cite{EY15} for a recent survey on this.

 \medskip

 This work focuses on the weighted iteration complexity of the ADMM, which will be conducted
 under a more convenient semi-proximal ADMM setting proposed by Fazel et al. \cite{FPST13}
 by allowing the dual step-size to be at least as large as the golden ratio $1.618$.
 This proximal ADMM, which provides a unified framework for the classic ADMM and
 its many variants, has not only the advantage to resolve the potentially non-solvability
 issue of the subproblems involved in the classical ADMM, but more importantly the ability
 to handle the multi-block separable convex optimization problems.
 For example, the symmetric Gauss-Seidel (sGS) based ADMM proposed by Li et al.
 \cite{LST16,STY14} for the separable convex minimization involving two nonsmooth convex
 and multiple linear/quadratic functions has been shown to be a sPADMM with
 a special proximal term.

 \medskip

 For any self-adjoint positive semidefinite linear operator $\mathcal{E}\!: \mathbb{H}\to\mathbb{H}$,
 where $\mathbb{H}$ is another real finite-dimensional Euclidean space equipped with an inner product
 $\langle\cdot,\cdot\rangle$ and its induced norm $\|\cdot\|$,
 we define $\|w\|_{\mathcal{E}}:=\sqrt{\langle w,\mathcal{E}w\rangle}$,
 ${\rm dist}_{\mathcal{E}}(w,C):=\inf_{w'\in C}\|w'-w\|_{\mathcal{E}}$
 and $D_{\mathcal{E}}(w,w'):=\frac{1}{2}\|w-w'\|_{\mathcal{E}}^2$ for any $w,w'\in\mathbb{H}$
 and any given closed set $C\subseteq\mathbb{H}$.  For any convex function $\phi\!:\mathbb{H}\to(-\infty,+\infty]$,
 we denote its effective domain by ${\rm dom}\,\phi:=\{w\in\mathbb{H}\ |\ \phi(w)<+\infty\}$ .
 Write $\vartheta_{\!f}(\cdot)\equiv \vartheta(\cdot)+f(\cdot)$
 and $\varphi_g(\cdot)\equiv \varphi(\cdot)+g(\cdot)$. Let $\beta>0$ be a given parameter.
 The augmented Lagrangian function of problem \eqref{prob} is defined by
 \[
   L_{\beta}(y,z;x):=\vartheta_{\!f}(y)+\varphi_g(z)+\langle x,\mathcal{A}^*y +\mathcal{B}^*z-c\rangle
   +\frac{\beta}{2}\|\mathcal{A}^*y +\mathcal{B}^*z-c\|^2.
 \]
 The sPADMM proposed in \cite{FPST13} generates the sequence $\{(y^k,z^k,x^k)\}_{k\ge 1}$ from
 an initial point $(y^0,z^0,x^0)\in{\rm dom}\,\vartheta\times{\rm dom}\,\varphi\times\mathbb{X}$
 by the following iteration formula
 \begin{subnumcases}{}\label{y-sPADMM}
   y^{k+1}\in\mathop{\arg\min}_{y\in\mathbb{Y}}\Big\{L_{\beta}(y,z^k;x^k)+\frac{1}{2}\|y\!-\!y^{k}\|^2_{\mathcal{S}}\Big\},\\
  \label{z-sPADMM}
   z^{k+1}\in\mathop{\arg\min}_{z\in\mathbb{Z}}\Big\{L_{\beta}(z^{k+1},y;x^k)+\frac{1}{2}\|z\!-\!z^{k}\|^2_{\mathcal{T}}\Big\},\\
  \label{x-sPADMM}
  x^{k+1}=x^{k} +\tau\beta\big(\mathcal{A}^*y^{k+1}+\mathcal{B}^*z^{k+1}-c\big),
 \end{subnumcases}
 where $\tau>0$ is a parameter to control the dual step-size, and $\mathcal{S}\!:\mathbb{Y}\to\mathbb{Y}$
 and $\mathcal{T}\!:\mathbb{Z}\to\mathbb{Z}$ are the given self-adjoint positive semidefinite linear operators.
 When $\mathcal{S}=\mathcal{T}=0$ and $\tau\in(0,\frac{1\!+\!\sqrt{5}}{2})$,
 equation \eqref{y-sPADMM}-\eqref{x-sPADMM} recovers the iterations of
 the classic ADMM. When $\mathcal{S}$ and $\mathcal{T}$ are positive definite,
 the above iteration scheme was initiated by Eckstein \cite{Eckstein94} to make
 the subproblems \eqref{y-sPADMM} and \eqref{z-sPADMM} easier to solve.
 While for positive semidefinite $\mathcal{S}$ and $\mathcal{T}$, Fazel et al. \cite{FPST13}
 developed an easy-to-use convergence theorem by using the essentially same
 variational techniques as in \cite{G08}, which plays a key role in the convergence
 analysis of the sGS based semi-proximal ADMM \cite{LST161}.
 In this paper, we assume that the self-adjoint positive semidefinite
 $\mathcal{S}$ and $\mathcal{T}$ are chosen such that
 (\ref{y-sPADMM}) and (\ref{z-sPADMM}) are well defined.

 \medskip

 For the sPADMM, Han, Sun and Zhang \cite{HSZ15} recently derived the local linear
 convergence rate under an error bound condition,
 while in this work we focus on an important global property, i.e.,
 the ergodic iteration complexity. The idea of considering averages of the iterates
 is very popular in the analysis of gradient-type and/or proximal point based methods
 for convex minimization and monotone variational inequalities (see, e.g.,
 \cite{NemY78, Nem04, Nest09, Lan12,CP15,RS2010,NedL15}). The existing ergodic iteration
 complexity of the ADMM are mostly derived for the objective values of the optimization problems.
 For example, Davis and Yin \cite{DY14} derived an $\mathcal{O}(1/k)$ ergodic iteration
 complexity on the objective values of problem \eqref{prob} for several splitting algorithms,
 including the classic ADMM with $\tau=1$ as a special case,
 Shefi and Teboulle \cite{ST14} conducted a comprehensive study on the iteration
 complexities, in particular in the ergodic sense for the sPADMM with $\tau=1$,
 Cui et al. \cite{CLST16} established an ergodic iteration complexity of the same order
 for the sPADMM with $\tau\in(0,\frac{1\!+\!\sqrt{5}}{2})$ where
 the objective function is allowed to have a coupled smooth term,
 and Ouyang et al. \cite{OCLJ15} provided an ergodic iteration complexity
 of the same order on the objective values for an accelerated linearized ADMM.
 To the best of our knowledge, the first ergodic iteration complexity of the ADMM on
 the KKT residuals instead of the objective values was derived by Monteiro and
 Svaiter \cite{RS2013} without requiring the boundedness of the feasible set.
 There the authors applied the weighted iteration complexity of the HPE method
 \cite{SS99,SS01} established in \cite{RS2010} for the maximal monotone
 operator inclusion problems to a block-decomposition algorithm and obtained
 an $\mathcal{O}({1}/{k})$ ergodic iteration complexity for the classic ADMM
 with the unit step-size $\tau=1$.

 \medskip

 Although there are so many research works on the iteration complexity of the ADMM,
 there is no ergodic iteration complexity on the KKT residuals even for the classic ADMM
 with a large step-size $\tau\in[1,\frac{1\!+\!\sqrt{5}}{2})$.
 As illustrated by the numerical results in \cite{WGY10,STY14,LST161}, the ADMM with a large
 step-size can improve in many cases at least $20\%$ the performance of the ADMM with a unit step-size.
 Then, it becomes an open issue whether the ADMM with a large step-size has an $\mathcal{O}(1/k)$
 ergodic iteration complexity on the KKT residuals or not. The contribution of this work
 is to resolve this open issue by establishing an $\mathcal{O}({1}/{k})$ weighted iteration
 complexity on the KKT residuals for the SPADMM with $\tau\in(0,\frac{1\!+\!\sqrt{5}}{2})$,
 and obtain the weighted iteration complexity on the KKT residuals for the sGS based ADMM as a by-product.
 This result was achieved with the help of a novel generalized HPE iteration formula
 (see Proposition \ref{ADMM-ergodic} and Remark \ref{remark-sequence}).

%-------------------------------------------------------------------------------------------------
 \section{Main result}\label{sec3}

 The Karush-Kuhn-Tucker (KKT) system for problem \eqref{prob} takes the following form
 \begin{equation}\label{KKT}
  0\in \partial\vartheta(y)+\nabla f(y)+\mathcal{A}x,\
  0\in \partial\varphi (z)+\nabla g(z)+\mathcal{B}x,\
  \mathcal{A}^*y+\mathcal{B}^*z=c.
 \end{equation}
 We assume that there exists a point, say $(y^*,z^*,x^*)$, satisfying the KKT system \eqref{KKT}.
 Write $\mathbb{H}:=\mathbb{Y}\times\mathbb{Z}\times\mathbb{X}$ and
 let $T\!:\mathbb{H}\rightrightarrows\mathbb{H}$ be the operator defined by
 \begin{equation}\label{ADMM-Toper}
   T(w):=\left[\begin{matrix}
      \partial \vartheta_{\!f}(y)+\mathcal{A}x \\
      \partial\varphi_{g}(z)+\mathcal{B}x\\
       c-\mathcal{A}^*y-\mathcal{B}^*z \\
       \end{matrix}\right]
      \quad\ \forall\,w=(y,z,x)\in\mathbb{H}.
 \end{equation}
 With such $T$, the KKT system \eqref{KKT} can be reformulated as the inclusion problem $0\in T(w)$.
 Since both $\partial \vartheta_{\!f}$ and $\partial\varphi_{g}$ are maximally monotone (see \cite{Roc70}),
 there exist two self-adjoint and positive semidefinite linear operators $\Sigma_{\vartheta_{\!f}}$
 and $\Sigma_{\varphi_{g}}$ such that for all $y',y\in{\rm dom}\,\vartheta_{\!f}$,
 $\xi'\in\partial \vartheta_{\!f}(y')$ and $\xi\in\partial\vartheta_{\!f}(y)$,
 and for all $z',z\in{\rm dom}\,\varphi_{g}$,
 $\eta'\in\partial\varphi_{g}(z')$ and $\eta\in\partial\varphi_{g}(z)$,
 \begin{equation}\label{fgmonotone}
   \langle \xi'\!-\!\xi,y'\!-\!y\rangle\ge\|y'\!-\!y\|_{\Sigma_{\vartheta_{\!f}}}^2\ \ {\rm and}\ \
   \langle \eta'\!-\!\eta,z'\!-\!z\rangle\ge\|z'\!-\!z\|_{\Sigma_{\varphi_{g}}}^2.
\end{equation}
 From equation \eqref{fgmonotone} and the definition of $T$, for all $w',w\in{\rm dom}\,T$,
 $\zeta'\in T(w')$ and $\zeta\in T(w)$,
 \begin{equation}\label{Tmonotone}
  \langle \zeta'\!-\!\zeta,w'\!-\!w\rangle\ge\|w'\!-\!w\|_{\Sigma}^2\quad\ {\rm with}\ \
  \Sigma:={\rm Diag}\left(\begin{matrix}\Sigma_{\vartheta_{\!f}},\Sigma_{\varphi_{g}},0\end{matrix}\right).
 \end{equation}
 This implies that $T$ is monotone. In fact, one may verify that $T$ is maximally monotone.

 \medskip

 In the rest of this paper, we let $\mathcal{G}\!:\mathbb{H}\to\mathbb{H}$ and
 $\mathcal{M}\!:\mathbb{H}\to\mathbb{H}$ be the self-adjoint block diagonal
 positive semidefinite linear operators defined by
 \begin{align}\label{Goper}
  \mathcal{G}:={\rm Diag}\left(\begin{matrix}
                \mathcal{S},\,\mathcal{T}\!+\!\beta\mathcal{B}\mathcal{B}^*,\,(\tau\beta)^{-1}\mathcal{I}
                 \end{matrix}\right),\qquad\quad\\
  \mathcal{M}:={\rm Diag}\left(\begin{matrix}
                \mathcal{S}\!+\!\Sigma_{\vartheta_{\!f}},\,\mathcal{T}\!+\!\beta\mathcal{B}\mathcal{B}^*\!+\!\Sigma_{\varphi_{g}},
                \,(\tau\beta)^{-1}\mathcal{I}
                 \end{matrix}\right).
  \label{Moper}
 \end{align}
 By the definition of $\Sigma$ in \eqref{Tmonotone}, clearly, $\mathcal{M}=\mathcal{G}+\Sigma$.
 For any $\tau\in (0,\frac{1\!+\!\sqrt{5}}{2})$, we write
 \begin{subnumcases}{}\label{sigma-def}
 \sigma_{\!\tau}:=\!\max\!\Big\{\frac{2-\tau}{2},\frac{1+\tau(\tau\!-\!1)^2}{2-(1\!-\!\tau)^2}\Big\},\\
 \label{gamma-def}
 \gamma_{\tau}:=\min\left\{\frac{\min(\tau^2\sigma_{\!\tau},\sigma_{\!\tau}\!-\!(\tau\!+\!1)(\tau\!-\!1)^2)}{\tau(\sigma_{\!\tau}\!-\!(\tau\!-\!1)^2)},
   \frac{\tau\sigma_{\!\tau}}{\sigma_{\!\tau}(\tau\!+\!1)\!+\!\tau\!-\!1}\right\},\\
  \nu_{\!\tau}\!:=\sigma_{\!\tau}(\tau\!+\!1)\!+\!\tau\!-\!1.
%  h_2(\tau,\sigma)\!:=\sigma(\tau\!+\!1)\!-\!1-(\sigma+\tau\!-\!1)\max(1\!-\!\tau,1\!-\!\tau^{-1}).
 \end{subnumcases}
 By using the definitions of $\sigma_{\!\tau}$ and $\nu_{\tau}$, it is not difficult to verify that
 \begin{align}\label{sigma-h1h2-equa1}
   \sigma_{\!\tau}\in(0,1)\ \ {\rm and}\ \ \nu_{\tau}\ge\max(\tau\sigma_{\!\tau},\sigma_{\!\tau}\!-\!(\tau\!-\!1)^2)\ \ {\rm for}\ \tau\in\Big(0,\frac{1\!+\!\sqrt{5}}{2}\Big),\qquad\\
  \label{sigma-h1h2-equa2}
  \tau^2\sigma_{\!\tau}<\tau(\sigma_{\!\tau}-(\tau\!-\!1)^2)<\sigma_{\!\tau}\!-\!(\tau\!+\!1)(\tau\!-\!1)^2\ \ {\rm for}\ \tau\in(0,1),\qquad\qquad\\
  \tau^2\sigma_{\!\tau}>\tau(\sigma_{\!\tau}\!-\!(\tau\!-\!1)^2) >\sigma_{\!\tau}-(\tau+1)(\tau\!-\!1)^2>0\ \ {\rm for}\ \tau\in\Big[1,\frac{1\!+\!\sqrt{5}}{2}\Big).\quad
 \label{sigma-h1h2-equa3}
 \end{align}
 Clearly, $\gamma_{\tau}\in(0,1)$ for $\tau\in(0,\frac{1+\sqrt{5}}{2})$ by \eqref{sigma-h1h2-equa1}-\eqref{sigma-h1h2-equa3}.
 Also, the definition of $\sigma_{\!\tau}$ implies that
 \begin{equation}\label{imply-sigma}
   \sigma_{\!\tau}(\tau\!+\!1)\!-\!1-(\sigma_{\!\tau}+\tau\!-\!1)\max(1\!-\!\tau,(\tau\!-\!1)\tau)=0.
 \end{equation}
 In addition, for $k=1,2,\ldots$, we denote $w^{k}:=(y^{k},z^{k},x^{k})$,
  $\widetilde{w}^k:=(y^{k},z^{k},\widetilde{x}^{k})$ with
  \(
    \widetilde{x}^{k}\!:=x^{k-1}\!+\!\beta(\mathcal{A}^*y^{k}\!+\!\mathcal{B}^*z^{k-1}\!-\!c),
  \)
 $r^{k}:=\mathcal{G}(w^{k-1}\!-\!w^{k})$, and $\eta_k:=\widetilde{\eta}_k+\frac{1}{4}\|w^k\!-\!w^{k-1}\|_{\Sigma}^2$ with
 \begin{align}
  \widetilde{\eta}_{k}&:=\frac{(\sigma_{\!\tau}\!-\!(\tau\!-\!1)^2)\beta}{2\tau}\|\mathcal{A}^*y^{k}\!+\!\mathcal{B}^*z^{k}\!-\!c\|^2
                         +\frac{\sigma_{\!\tau}}{2}\|y^{k}\!-\!y^{k-1}\|^2_{\mathcal{S}}\nonumber\\
                      &\quad\ +\frac{\nu_{\!\tau}}{2\tau}\|z^{k}\!-\!z^{k-1}\|^2_{\mathcal{T}}
                             +\frac{\sigma_{\!\tau}\!+\!\tau\!-\!1}{\tau}\|z^{k}\!-\!z^{k-1}\|^2_{\Sigma_{\varphi_{g}}}.
 \end{align}

 Before achieving the main result, we prove some properties of the sequence $\{w^k\}$.
%---------------------------------------------------------------------------------------------------
 \begin{lemma}\label{lemma1-sequence}
  Let $\{(y^{k},z^{k},x^{k})\}$ be the sequence generated by (\ref{y-sPADMM})-(\ref{x-sPADMM})
  with $\tau\in(0,+\infty)$. Then, for all $k\ge 1$, it holds that
  $\mathcal{G}(w^{k}\!-\!w^{k+1})\in T(\widetilde{w}^{k+1})$.
 \end{lemma}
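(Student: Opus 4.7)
The plan is to verify the inclusion $\mathcal{G}(w^k-w^{k+1})\in T(\widetilde{w}^{k+1})$ coordinate by coordinate, since both $\mathcal{G}$ and $T$ are block-structured. The three components to check are, respectively, the $\mathcal{S}$-block against $\partial\vartheta_{\!f}(y^{k+1})+\mathcal{A}\widetilde{x}^{k+1}$, the $(\mathcal{T}+\beta\mathcal{B}\mathcal{B}^*)$-block against $\partial\varphi_g(z^{k+1})+\mathcal{B}\widetilde{x}^{k+1}$, and the $(\tau\beta)^{-1}\mathcal{I}$-block against $c-\mathcal{A}^*y^{k+1}-\mathcal{B}^*z^{k+1}$.

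First I would write down the first-order optimality condition for the $y$-subproblem \eqref{y-sPADMM}, namely
\[
0\in\partial\vartheta(y^{k+1})+\nabla f(y^{k+1})+\mathcal{A}x^k+\beta\mathcal{A}(\mathcal{A}^*y^{k+1}+\mathcal{B}^*z^k-c)+\mathcal{S}(y^{k+1}-y^k),
\]
and recognize the bracketed dual-like term as precisely $\widetilde{x}^{k+1}$. Rearranging gives $\mathcal{S}(y^k-y^{k+1})\in\partial\vartheta_{\!f}(y^{k+1})+\mathcal{A}\widetilde{x}^{k+1}$, which matches the first block. The third block is immediate from the dual update \eqref{x-sPADMM}: multiplying $x^{k+1}-x^k=\tau\beta(\mathcal{A}^*y^{k+1}+\mathcal{B}^*z^{k+1}-c)$ by $-(\tau\beta)^{-1}$ produces exactly $c-\mathcal{A}^*y^{k+1}-\mathcal{B}^*z^{k+1}$.

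The only step requiring any small algebraic observation is the middle block. The optimality condition for \eqref{z-sPADMM} reads
\[
0\in\partial\varphi(z^{k+1})+\nabla g(z^{k+1})+\mathcal{B}x^k+\beta\mathcal{B}(\mathcal{A}^*y^{k+1}+\mathcal{B}^*z^{k+1}-c)+\mathcal{T}(z^{k+1}-z^k).
\]
Here the dual-like term $x^k+\beta(\mathcal{A}^*y^{k+1}+\mathcal{B}^*z^{k+1}-c)$ is not $\widetilde{x}^{k+1}$; it differs by $\beta\mathcal{B}^*(z^{k+1}-z^k)$. Pushing this discrepancy through $\mathcal{B}$ produces an extra term $\beta\mathcal{B}\mathcal{B}^*(z^{k+1}-z^k)$ which, together with the proximal term $\mathcal{T}(z^{k+1}-z^k)$, is precisely what one needs to transfer to the left-hand side to form the $(\mathcal{T}+\beta\mathcal{B}\mathcal{B}^*)$-block of $\mathcal{G}$. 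This is exactly why $\mathcal{G}$ was defined with the $\beta\mathcal{B}\mathcal{B}^*$ term in its second diagonal entry.

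There is no real obstacle here beyond careful bookkeeping; the lemma is essentially the statement that the sPADMM iteration can be recast as a generalized proximal point step on $T$ with metric $\mathcal{G}$, evaluated at the auxiliary point $\widetilde{w}^{k+1}$. The entire proof reduces to writing out two KKT conditions and one identity, and invoking the fact that $\partial\vartheta_{\!f}=\partial\vartheta+\nabla f$ and $\partial\varphi_g=\partial\varphi+\nabla g$ (since $f,g$ are smooth).
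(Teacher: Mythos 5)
Your proof is correct and follows essentially the same route as the paper's: both write out the first-order optimality conditions of the two subproblems, substitute to identify the term $x^k+\beta(\mathcal{A}^*y^{k+1}+\mathcal{B}^*z^k-c)$ as $\widetilde{x}^{k+1}$ (absorbing the leftover $\beta\mathcal{B}\mathcal{B}^*(z^{k+1}-z^k)$ into the second diagonal block of $\mathcal{G}$), and read off the third block directly from the dual update \eqref{x-sPADMM}. There is no gap.
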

 \begin{proof}
   By the optimality of $y^{k+1}$ and $z^{k+1}$ in equations (\ref{y-sPADMM}) and (\ref{z-sPADMM}), we know that
  \begin{subnumcases}{}
   0\in \partial\vartheta_{\!f}(y^{k+1})+\mathcal{A}x^{k}+\beta\mathcal{A}(\mathcal{A}^*y^{k+1}+\mathcal{B}^*z^{k}-c)+\mathcal{S}(y^{k+1}-y^{k}), \label{opt-x}\\
   0\in \partial\varphi_{g}(z^{k+1})+\mathcal{B}x^{k}+\beta\mathcal{B}(\mathcal{A}^*y^{k+1}+\mathcal{B}^*z^{k+1}-c)+\mathcal{T}(z^{k+1}-z^{k}).
   \label{opt-y}
  \end{subnumcases}
  Substituting $x^{k}=\widetilde{x}^{k+1}-\beta(\mathcal{A}^*y^{k+1}+\mathcal{B}^*z^{k}-c)$
  into (\ref{opt-x})-(\ref{opt-y}) respectively yields that
  \begin{subnumcases}{}
   0\in \partial\vartheta_{\!f}(y^{k+1})+\mathcal{A}\widetilde{x}^{k+1}+\mathcal{S}(y^{k+1}-y^{k}),\nonumber\\
   0\in \partial\varphi_{g}(z^{k+1})+\mathcal{B}\widetilde{x}^{k+1}+(\mathcal{T}\!+\!\beta\mathcal{B}\mathcal{B}^*)(z^{k+1}-z^{k}).\nonumber
  \end{subnumcases}
 Together with the equality $x^{k+1}-x^{k}=\tau\beta(\mathcal{A}^*y^{k+1}+\mathcal{B}^*z^{k+1}-c)$,
  we obtain that
 \begin{subnumcases}{}
  \mathcal{S}(y^{k}-y^{k+1})\in \partial\vartheta_{\!f}(y^{k+1})+\mathcal{A}\widetilde{x}^{k+1},\nonumber\\
  (\mathcal{T}+\beta\mathcal{B}\mathcal{B}^*)(z^{k}-z^{k+1})\in \partial\varphi_{g}(z^{k+1})+\mathcal{B}\widetilde{x}^{k+1},\nonumber\\
  (\tau\beta)^{-1}(x^{k}-x^{k+1})= c-\mathcal{A}^*y^{k+1}-\mathcal{B}^*z^{k+1},\nonumber
 \end{subnumcases}
 which, by the definitions of $\mathcal{G}$ and $T$, can be written as
 \(
  \mathcal{G}(w^{k}\!-\!w^{k+1}) \in T(\widetilde{w}^{k+1}).
 \)
\end{proof}
%---------------------------------------------------------------------------------------------------
 \begin{lemma}\label{lemma2-sequence}
  Let $\{(y^{k},z^{k},x^{k})\}$ be the sequence generated by (\ref{y-sPADMM})-(\ref{x-sPADMM})
  with $\tau\in(0,+\infty)$. For $k=1,2,\ldots$, write $\psi_{k+1}\!:=\!\langle\mathcal{A}^*y^{k+1}\!+\!\mathcal{B}^*z^{k+1}\!-\!c,\mathcal{B}^*(z^{k+1}\!-\!z^{k})\rangle$.
  Then,
  \begin{align*}%\label{upper}
  \beta\psi_{k+1}\!+\!\|z^{k+1}\!-\!z^k\|_{\Sigma_{\varphi_{g}}}^2
  &\le \frac{\max(1\!-\!\tau,1\!-\!\tau^{-1})\beta}{2}\|\mathcal{A}^*y^{k}\!+\!\mathcal{B}^*z^{k}\!-\!c\|^2
       \!-\!\frac{1}{2}\|z^{k+1}\!-\!z^{k}\|^2_{\mathcal{T}}\nonumber\\
  &\quad +\frac{\max(1\!-\!\tau,(\tau\!-\!1)\tau)\beta}{2}\|\mathcal{B}^*(z^{k+1}\!-\!z^{k})\|^2
         \!+\!\frac{1}{2}\|z^{k}\!-\!z^{k-1}\|^2_{\mathcal{T}}.
 \end{align*}
 \end{lemma}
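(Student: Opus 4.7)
The plan is to derive the stated bound by comparing the $z$-subproblem optimality conditions at iterations $k$ and $k+1$ and then applying the strong-monotonicity estimate \eqref{fgmonotone} for $\partial\varphi_g$. Starting from \eqref{opt-y} and using the dual update $x^{k+1}=x^k+\tau\beta\,r_{k+1}$ (writing $r_{k+1}:=\mathcal{A}^*y^{k+1}+\mathcal{B}^*z^{k+1}-c$) to eliminate $x^k$, I would rewrite the inclusion as
\[
 -\mathcal{B}x^{k+1}-(1-\tau)\beta\,\mathcal{B}r_{k+1}-\mathcal{T}(z^{k+1}-z^k)\in\partial\varphi_g(z^{k+1}),
\]
and analogously (replacing $k+1,k$ by $k,k-1$) to produce a companion inclusion at $z^k$. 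Subtracting the two, taking the inner product with $z^{k+1}-z^k$, invoking \eqref{fgmonotone}, and using $\mathcal{B}(x^{k+1}-x^k)=\tau\beta\mathcal{B}r_{k+1}$ to collapse the two $r_{k+1}$-contributions into a single $-\beta\mathcal{B}r_{k+1}$ would yield the intermediate bound
\begin{align*}
\beta\psi_{k+1}+\|z^{k+1}-z^k\|^2_{\Sigma_{\varphi_{g}}}
&\le (1-\tau)\beta\,\langle r_k,\mathcal{B}^*(z^{k+1}-z^k)\rangle -\|z^{k+1}-z^k\|^2_{\mathcal{T}}\\
&\quad +\langle \mathcal{T}(z^k-z^{k-1}),\,z^{k+1}-z^k\rangle.
\end{align*}

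Next, I would tame the $\mathcal{T}$ cross term by the elementary Young inequality
\[
 \langle \mathcal{T}(z^k-z^{k-1}),z^{k+1}-z^k\rangle\le \tfrac{1}{2}\|z^{k+1}-z^k\|_{\mathcal{T}}^2+\tfrac{1}{2}\|z^k-z^{k-1}\|_{\mathcal{T}}^2,
\]
which combines with $-\|z^{k+1}-z^k\|_{\mathcal{T}}^2$ to produce the desired $-\tfrac{1}{2}\|z^{k+1}-z^k\|_{\mathcal{T}}^2+\tfrac{1}{2}\|z^k-z^{k-1}\|_{\mathcal{T}}^2$. At this stage the only remaining piece to bound is the cross term $(1-\tau)\beta\,\langle r_k,\mathcal{B}^*(z^{k+1}-z^k)\rangle$.

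The main obstacle is to realize the exact $\max$-coefficients. For this I would split according to the sign of $1-\tau$. For $\tau\in(0,1]$, Young with parameter $1$ yields $\tfrac{(1-\tau)\beta}{2}\|r_k\|^2+\tfrac{(1-\tau)\beta}{2}\|\mathcal{B}^*(z^{k+1}-z^k)\|^2$; for $\tau\ge 1$, rewriting the term as $(\tau-1)\beta\,\langle r_k,-\mathcal{B}^*(z^{k+1}-z^k)\rangle$ and applying Young with parameter $s=\tau$ gives $\tfrac{(\tau-1)\beta}{2\tau}\|r_k\|^2+\tfrac{(\tau-1)\tau\beta}{2}\|\mathcal{B}^*(z^{k+1}-z^k)\|^2=\tfrac{(1-\tau^{-1})\beta}{2}\|r_k\|^2+\tfrac{(\tau-1)\tau\beta}{2}\|\mathcal{B}^*(z^{k+1}-z^k)\|^2$. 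Since $\max(1-\tau,1-\tau^{-1})$ and $\max(1-\tau,(\tau-1)\tau)$ take precisely these values in the respective regimes, merging the two cases via the $\max$ delivers the stated inequality. The subtle point is to choose Young's parameter to be $\tau$ (rather than $1$) in the second regime so that the coefficient on $\|\mathcal{B}^*(z^{k+1}-z^k)\|^2$ comes out to exactly $(\tau-1)\tau$ instead of a loose overestimate.
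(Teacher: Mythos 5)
Your proposal is correct and follows essentially the same route as the paper: the same rewriting of the $z$-optimality conditions at consecutive iterates via the dual update (so that the $r_{k+1}$-terms collapse to $-\beta\psi_{k+1}$), the same application of \eqref{fgmonotone}, and the same two Young-type estimates (parameter $1$ for $\tau\in(0,1)$ and parameter $\tau$ for $\tau\ge 1$) producing exactly the stated $\max$-coefficients. No gaps.
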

 \begin{proof}
  From the optimality of $z^{k}$ and $z^{k+1}$ in equation \eqref{z-sPADMM}, it follows that
 \begin{subnumcases}{}
   0\in \partial\varphi_{g}(z^{k})+\mathcal{B}x^{k}+(1\!-\!\tau)\beta\mathcal{B}(\mathcal{A}^{*}y^{k}+\mathcal{B}^{*}z^{k}-c)
        +\mathcal{T}(z^{k}-z^{k-1}),\nonumber\\
   0\in \partial\varphi_{g}(z^{k+1})+\mathcal{B}x^{k+1}+(1\!-\!\tau)\beta\mathcal{B}(\mathcal{A}^*y^{k+1}+\mathcal{B}^*z^{k+1}\!-\!c)
        +\mathcal{T}(z^{k+1}\!-\!z^{k}).\nonumber
 \end{subnumcases}
  Together with the second inequality in \eqref{fgmonotone} and equation \eqref{x-sPADMM}, we have that
 \begin{align}\label{temp-ineq1}
  \|z^{k+1}\!-\!z^k\|_{\Sigma_{\varphi_{g}}}^2
  &\le (1\!-\!\tau)\beta\langle\mathcal{B}(\mathcal{A}^{*}y^{k+1}+\mathcal{B}^{*}z^{k+1}-c),z^{k}-z^{k+1}\rangle\nonumber\\
  &\quad + \langle\mathcal{B}x^{k+1}\!-\!\mathcal{B}x^{k},z^{k}\!-\!z^{k+1}\rangle
        -\!(1\!-\!\tau)\beta\langle \mathcal{B}(\mathcal{A}^{*}y^{k}\!+\!\mathcal{B}^{*}z^{k}\!-\!c), z^{k}\!-\!z^{k+1}\rangle\nonumber\\
  &\quad  \!-\!\|z^{k+1}\!-\!z^{k}\|^2_{\mathcal{T}}\!-\! \langle \mathcal{T}(z^{k}\!-\!z^{k-1}),z^{k}\!-\!z^{k+1}\rangle\nonumber\\
  &=-\beta\psi_{k+1} \!-\! \langle \mathcal{T}(z^{k}\!-\!z^{k-1}),z^{k}\!-\!z^{k+1}\rangle
     \!-\!\|z^{k+1}\!-\!z^{k}\|^2_{\mathcal{T}}\nonumber\\
   &\quad +(1\!-\!\tau)\beta\langle \mathcal{A}^*y^{k}\!+\!\mathcal{B}^*z^{k}\!-\!c, \mathcal{B}^*(z^{k+1}\!-\!z^{k}) \rangle.
 \end{align}
 Notice that $|\langle \mathcal{T}(z^{k}\!-\!z^{k-1}),z^{k}\!-\!z^{k+1}\rangle|
 \le \frac{1}{2}\|z^{k}\!-\!z^{k-1}\|^2_{\mathcal{T}}+\frac{1}{2}\|z^{k+1}\!-\!z^{k}\|^2_{\mathcal{T}}$ and
 \begin{align*}
  &\ 2(1\!-\!\tau)\langle \mathcal{A}^*y^{k}\!+\!\mathcal{B}^*z^{k}\!-\!c, \mathcal{B}^*(z^{k+1}\!-\!z^{k})\\
  &\le\left\{\begin{array}{ll}
   (1\!-\!\tau)\|\mathcal{A}^*y^{k}\!+\!\mathcal{B}^*z^{k}\!-\!c\|^2+ (1\!-\!\tau)\|\mathcal{B}^*(z^{k+1}\!-\!z^{k})\|^2
   &{\rm if}\ \tau \in (0,1), \nonumber\\
  (1\!-\!\tau^{-1})\|\mathcal{A}^*y^{k}\!+\!\mathcal{B}^*z^{k}\!-\!c\|^2+ (\tau\!-\!1)\tau\|\mathcal{B}(z^{k+1}\!-\!z^{k})\|^2
  &{\rm if}\ \tau \in [1,+\infty).
  \end{array}\right.
 \end{align*}
  Along with equation \eqref{temp-ineq1}, we obtain the desired inequality.
 \end{proof}

 The result of Lemma \ref{lemma2-sequence} is actually implied in the proof of \cite[Theorem B.1]{FPST13}.
 By Lemma \ref{lemma1-sequence} and \ref{lemma2-sequence}, we can establish
  the following key property of the sequence $\{w^k\}$.
%-----------------------------------------------------------------------------------------------Lemma
 \begin{proposition}\label{Prop1-sequence}
  Let $\{(y^{k},z^{k},x^{k})\}$ be the sequence generated by equation (\ref{y-sPADMM})-(\ref{x-sPADMM})
  with $\tau\in(0,\frac{1+\sqrt{5}}{2})$. Then, for all $k\ge 1$, the following relations hold:
  \begin{subnumcases}{}\label{inexact-a}
   r^{k+1}\in T(\widetilde{w}^{k+1});\\
   \label{inexact-b}
   {D}_{\mathcal{G}}(\widetilde{w}^{k+1},w^{k+1})+\widetilde{\eta}_{k+1}
   \le\sigma_{\!\tau}{D}_{\mathcal{G}}(\widetilde{w}^{k+1},w^{k})+(1\!-\!\gamma_{\tau})\widetilde{\eta}_{k}.
  \end{subnumcases}
  \end{proposition}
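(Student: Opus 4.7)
Part (a) is nothing other than Lemma \ref{lemma1-sequence} applied at index $k$, since by definition $r^{k+1}=\mathcal{G}(w^{k}-w^{k+1})$. All the work is in part (b), which I would attack by expanding both sides of the claimed inequality into the same collection of elementary quadratic terms and matching coefficients, with Lemma \ref{lemma2-sequence} used to clear a single unwanted cross product.

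The first step is to compute the two Bregman distances explicitly. Because $\widetilde{w}^{k+1}-w^{k+1}$ has only a nonzero $x$-component, $D_{\mathcal{G}}(\widetilde{w}^{k+1},w^{k+1})=\frac{1}{2\tau\beta}\|\widetilde{x}^{k+1}-x^{k+1}\|^{2}$, and the definition of $\widetilde{x}^{k+1}$ together with \eqref{x-sPADMM} yields $\widetilde{x}^{k+1}-x^{k+1}=(1-\tau)\beta(\mathcal{A}^{*}y^{k+1}+\mathcal{B}^{*}z^{k+1}-c)-\beta\mathcal{B}^{*}(z^{k+1}-z^{k})$; squaring produces three terms in $\|\mathcal{A}^{*}y^{k+1}+\mathcal{B}^{*}z^{k+1}-c\|^{2}$, $\|\mathcal{B}^{*}(z^{k+1}-z^{k})\|^{2}$, and the cross term $\psi_{k+1}$. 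For $D_{\mathcal{G}}(\widetilde{w}^{k+1},w^{k})$, the $x$-block uses $\widetilde{x}^{k+1}-x^{k}=\beta(\mathcal{A}^{*}y^{k+1}+\mathcal{B}^{*}z^{k}-c)$, and after rewriting $\mathcal{B}^{*}z^{k}=\mathcal{B}^{*}z^{k+1}-\mathcal{B}^{*}(z^{k+1}-z^{k})$ it contributes the same three quadratic types; the $y$- and $z$-blocks add the expected $\frac{1}{2}\|y^{k+1}-y^{k}\|^{2}_{\mathcal{S}}$ and $\frac{1}{2}\|z^{k+1}-z^{k}\|^{2}_{\mathcal{T}+\beta\mathcal{B}\mathcal{B}^{*}}$.

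Substituting these expansions into $D_{\mathcal{G}}(\widetilde{w}^{k+1},w^{k+1})-\sigma_{\!\tau}D_{\mathcal{G}}(\widetilde{w}^{k+1},w^{k})$, the only term that is not already of the shape needed for $\widetilde{\eta}_{k+1}$ or $\widetilde{\eta}_{k}$ is a multiple of $\psi_{k+1}$. Lemma \ref{lemma2-sequence} precisely swaps $\beta\psi_{k+1}+\|z^{k+1}-z^{k}\|^{2}_{\Sigma_{\varphi_{g}}}$ for a combination of the step-$k$ feasibility residual, a $\|\mathcal{B}^{*}(z^{k+1}-z^{k})\|^{2}$ piece, and a telescoping $\mathcal{T}$-difference. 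After applying this substitution, every surviving term matches, up to a coefficient, one of the five quadratic forms appearing in $\widetilde{\eta}_{k+1}$ on the left or $(1-\gamma_{\tau})\widetilde{\eta}_{k}$ on the right.

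What remains is coefficient matching, and this is where I expect the real difficulty. The identity \eqref{imply-sigma} is exactly what is needed to force the coefficient of $\|\mathcal{A}^{*}y^{k+1}+\mathcal{B}^{*}z^{k+1}-c\|^{2}$ to collapse onto $(\sigma_{\!\tau}-(\tau-1)^{2})\beta/(2\tau)$, that is, onto the coefficient in $\widetilde{\eta}_{k+1}$; correspondingly, the two branches of the $\min$ in the definition of $\gamma_{\tau}$ deliver the tight discount factors on the $\mathcal{T}$- and $\Sigma_{\varphi_{g}}$-blocks, one arising from the $\mathcal{T}$-telescoping and the other from the $\nu_{\!\tau}$-normalization. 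The auxiliary inequalities \eqref{sigma-h1h2-equa1}-\eqref{sigma-h1h2-equa3} then guarantee that all the signs come out correctly across the two regimes $\tau\in(0,1)$ and $\tau\in[1,\tfrac{1+\sqrt{5}}{2})$. The bookkeeping is heavy and the definitions of $\sigma_{\!\tau},\nu_{\!\tau},\gamma_{\tau}$ look opaque in isolation; only after the cross-product is absorbed via Lemma \ref{lemma2-sequence} and \eqref{imply-sigma} is applied does one see that these constants are forced on us, not chosen by hand.
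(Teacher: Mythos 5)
Your outline is essentially the paper's own proof: expand the two $\mathcal{G}$-distances into the quadratic forms in $\|\mathcal{A}^*y^{k+1}\!+\!\mathcal{B}^*z^{k+1}\!-\!c\|^2$, $\|\mathcal{B}^*(z^{k+1}\!-\!z^{k})\|^2$ and the cross term $\psi_{k+1}$, absorb $\psi_{k+1}$ via Lemma \ref{lemma2-sequence} (using $\sigma_{\!\tau}+\tau-1>0$ to keep the inequality direction), and then match coefficients against $\widetilde{\eta}_{k+1}$ and $(1-\gamma_{\tau})\widetilde{\eta}_{k}$ through a two-case choice of discount factors. One correction to your account of the mechanism: the identity \eqref{imply-sigma} is not what produces the coefficient $(\sigma_{\!\tau}-(\tau-1)^2)\beta/(2\tau)$ on $\|\mathcal{A}^*y^{k+1}\!+\!\mathcal{B}^*z^{k+1}\!-\!c\|^2$ --- that falls out of the raw expansion as $\sigma_{\!\tau}\beta/(2\tau)-(\tau-1)^2\beta/(2\tau)$; rather, \eqref{imply-sigma} is needed to annihilate the $\|\mathcal{B}^*(z^{k+1}\!-\!z^{k})\|^2$ block that survives after Lemma \ref{lemma2-sequence} is applied, which must vanish because $\widetilde{\eta}_{k+1}$ contains no such term. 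The ``heavy bookkeeping'' you defer is precisely the paper's Case 1/Case 2 analysis with $\gamma_2=\tau\sigma_{\!\tau}/\nu_{\tau}$ and $\gamma_1$ equal to $\tau\sigma_{\!\tau}/(\sigma_{\!\tau}-(\tau-1)^2)$ for $\tau\in(0,1)$ and $(\sigma_{\!\tau}-(\tau+1)(\tau-1)^2)/(\tau(\sigma_{\!\tau}-(\tau-1)^2))$ for $\tau\in[1,\tfrac{1+\sqrt{5}}{2})$, together with the lower bound \eqref{etak-ineq} on $\widetilde{\eta}_{k}$; this does go through, so the proposal is sound in strategy but stops short of the decisive coefficient verification.
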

  \begin{proof}
  Equation \eqref{inexact-a} directly follows from Lemma \ref{lemma1-sequence} and the definition of $r^{k+1}$.
  We next prove that inequality \eqref{inexact-b} holds. By the definition of $D_{\mathcal{G}}(\cdot,\cdot)$,
  we have that
  \begin{align*}
  D_{\mathcal{G}}(\widetilde{w}^{k+1},w^{k+1})=\frac{1}{2}\|w^{k+1}- \widetilde{w}^{k+1}\|_{\mathcal{G}}^2
  = \frac{1}{2\tau\beta}\|x^{k+1}- \widetilde{x}^{k+1}\|^2,\qquad\qquad\quad\\
  D_{\mathcal{G}}(\widetilde{w}^{k+1},w^{k})
  =\frac{1}{2}\|y^{k+1}\!-\!y^{k}\|^2_{\mathcal{S}}+\frac{1}{2}\|z^{k+1}\!-\!z^{k}\|^2_{(\mathcal{T}+\beta\mathcal{B}\mathcal{B}^*)}
               +\frac{1}{2\tau\beta}\|\widetilde{x}^{k+1}\!-\!x^{k}\|^2.
 \end{align*}
  Multiplying the second equality with $\sigma_{\!\tau}$ and then subtracting the first one yields that
 \begin{align}\label{check-inexact1}
  \sigma_{\!\tau} D_{\mathcal{G}}(\widetilde{w}^{k+1},w^{k})-D_{\mathcal{G}}(\widetilde{w}^{k+1},w^{k+1})
  =&\ \frac{\sigma_{\!\tau}}{2}\|y^{k+1}\!-\!y^{k}\|^2_{\mathcal{S}}
        +\frac{\sigma_{\!\tau}}{2}\|z^{k+1}\!-\!z^{k}\|^2_{(\mathcal{T}+\beta\mathcal{B}\mathcal{B}^*)}\nonumber\\
    &+\frac{\sigma_{\!\tau}}{2\tau\beta}\|\widetilde{x}^{k+1}\!-\!x^{k}\|^2
    -\frac{1}{2\tau\beta}\|x^{k+1}\!-\!\widetilde{x}^{k+1}\|^2.
 \end{align}
 By the definition of $\widetilde{x}^{k+1}$, the last two terms on the right hand side of \eqref{check-inexact1}
 are equal to
 \begin{align*}%\label{ADMM-HPE-eq1}
 \frac{\sigma_{\!\tau}}{2\tau\beta}\|\widetilde{x}^{k+1}\!-\!x^{k}\|^2
  &= \frac{\beta\sigma_{\!\tau}}{2\tau}\|\mathcal{A}^*y^{k+1}\!+\!\mathcal{B}^*z^{k+1}\!-\!c\|^2
     \!+\!\frac{\beta\sigma_{\!\tau}}{2\tau}\|\mathcal{B}^*(z^{k+1}\!-\!z^{k})\|^2 \!-\!\frac{\beta\sigma_{\!\tau}}{\tau}\psi_{k+1},\\
   \frac{1}{2\tau\beta}\|x^{k+1}\!-\!\widetilde{x}^{k+1}\|^2
  &=\!\frac{(\tau\!-\!1)^2\beta}{2\tau}\|\mathcal{A}^*y^{k+1}\!+\!\mathcal{B}^*z^{k+1}\!-\!c\|^2
    \!+\!\frac{\beta}{2\tau}\|\mathcal{B}^*(z^{k+1}\!-\!z^{k})\|^2\!+\!\frac{(\tau\!-\!1)\beta}{\tau}\psi_{k+1}.
 \end{align*}
 Substituting the two equalities into equation (\ref{check-inexact1}),
 we immediately obtain that
 \begin{align}\label{ADMM-HPE-eq3}
  &\sigma_{\!\tau} D_{\mathcal{G}}(\widetilde{w}^{k+1},w^{k})-D_{\mathcal{G}}(\widetilde{w}^{k+1},w^{k+1})\nonumber\\
  &\!=\frac{(\sigma_{\!\tau}\!-\!(\tau\!-\!1)^2)\beta}{2\tau}\|\mathcal{A}^*y^{k+1}\!+\!\mathcal{B}^*z^{k+1}\!-\!c\|^2
   +\frac{\sigma_{\!\tau}}{2}\|y^{k+1}\!-\!y^{k}\|^2_{\mathcal{S}}+\frac{\sigma_{\!\tau}}{2}\|z^{k+1}\!-\!z^{k}\|^2_{\mathcal{T}}\nonumber\\
  &\quad+\frac{\sigma_{\!\tau}(\tau\!+\!1)\!-\!1}{2\tau}\|z^{k+1}\!-\!z^{k}\|^2_{\beta\mathcal{B}\mathcal{B}^*} \!-\!\frac{\sigma_{\!\tau}\!+\!\tau\!-\!1}{\tau}\beta\psi_{k+1}\nonumber\\
  &\!\geq \frac{(\sigma_{\!\tau}\!-\!(\tau\!-\!1)^2)\beta}{2\tau}\|\mathcal{A}^*y^{k+1}\!+\!\mathcal{B}^*z^{k+1}\!-\!c\|^2
        +\frac{\sigma_{\!\tau}}{2}\|y^{k+1}\!-\!y^{k}\|^2_{\mathcal{S}}\nonumber\\
  &\quad -\frac{(\sigma_{\!\tau}+\tau\!-\!1)\max(1\!-\!\tau,1-\tau^{-1})\beta}{2\tau}\|\mathcal{A}^*y^{k}\!+\!\mathcal{B}^*z^{k}\!-\!c\|^2\\
  &\quad +\frac{\sigma_{\!\tau}(\tau\!+\!1)\!-\!1-(\sigma_{\!\tau}+\tau\!-\!1)\max(1\!-\!\tau,(\tau\!-\!1)\tau)}{2\tau}
        \|z^{k+1}-z^{k}\|^2_{\beta\mathcal{B}\mathcal{B}^*}\nonumber\\
  &\quad +\frac{\nu_{\tau}}{2}\|z^{k+1}\!-\!z^{k}\|^2_{\mathcal{T}}
       -\frac{\sigma_{\!\tau}+\tau\!-\!1}{2\tau}\|z^{k}\!-\!z^{k-1}\|^2_{\mathcal{T}}
       +\frac{\sigma_{\!\tau}+\tau\!-\!1}{\tau}\|z^{k+1}\!-\!z^{k}\|^2_{\Sigma_{\varphi_{g}}}\nonumber\\
%  &=\frac{(\sigma\!-\!(\tau\!-\!1)^2)\beta}{2\tau}\|\mathcal{A}^*y^{k+1}\!+\!\mathcal{B}^*z^{k+1}\!-\!c\|^2
%        +\frac{\sigma}{2}\|y^{k+1}-y^{k}\|^2_{\mathcal{S}}+\frac{h(\tau,\sigma)}{2\tau}\|z^{k+1}\!-\!z^{k}\|^2_{\mathcal{T}+\Sigma_{\varphi_{g}}}\nonumber\\
%  &\quad -\frac{(\sigma+\tau\!-\!1)\max(1\!-\!\tau,1-\tau^{-1})\beta}{2\tau}\|\mathcal{A}^*y^{k}\!+\!\mathcal{B}^*z^{k}\!-\!c\|^2
%       -\frac{\sigma+\tau\!-\!1}{2\tau}\|z^{k}\!-\!z^{k-1}\|^2_{\mathcal{T}}\nonumber\\
  &\!=\widetilde{\eta}_{k+1}-\!\frac{(\sigma_{\!\tau}\!+\!\tau\!-\!1)\max(1\!-\!\tau,1\!-\!\tau^{-1})\beta}{2\tau}\|\mathcal{A}^*y^{k}\!+\!\mathcal{B}^*z^{k}\!-\!c\|^2
    \!-\!\frac{\sigma_{\!\tau}\!+\!\tau\!-\!1}{2\tau}\|z^{k}\!-\!z^{k-1}\|^2_{\mathcal{T}}\nonumber
 \end{align}
  where the inequality is using $\sigma\!+\!\tau\!-\!1 \!>\! 0$ and Lemma \ref{lemma2-sequence},
  and the second equality is by \eqref{imply-sigma} and the definition of $\widetilde{\eta}_{k+1}$.
  We proceed the arguments by two cases.

 \medskip
 \noindent
 {\bf Case 1:} $\tau \in (0,1)$. In this case, let $\gamma_1= \frac{\tau\sigma_{\!\tau}}{\sigma_{\!\tau}-(\tau-1)^2}$ and $\gamma_2=\frac{\tau\sigma_{\!\tau}}{\nu_{\tau}}$. Then we have that
  $(\sigma_{\!\tau}+\tau\!-\!1)(1\!-\!\tau)=(1\!-\!\gamma_1)(\sigma_{\!\tau}\!-\!(\tau\!-\!1)^2)$
  and $(\sigma_{\!\tau}+\tau\!-\!1)=(1\!-\!\gamma_2) \nu_{\tau}$, and consequently,
 \begin{align*}
 &\ \frac{(\sigma_{\!\tau}+\tau\!-\!1)(1\!-\!\tau)\beta}{2\tau}\|\mathcal{A}^*y^{k}\!+\!\mathcal{B}^*z^{k}\!-\!c\|^2 +\frac{\sigma_{\!\tau}+\tau\!-\!1}{2\tau}\|z^{k}\!-\!z^{k-1}\|^2_{\mathcal{T}}\nonumber\\
 &= \frac{(1\!-\!\gamma_1)(\sigma_{\!\tau}\!-\!(\tau\!-\!1)^2)\beta}{2\tau}\|\mathcal{A}^*y^{k}\!+\!\mathcal{B}^*z^{k}\!-\!c\|^2 +\frac{(1\!-\!\gamma_2)\nu_{\tau}}{2\tau}\|z^{k}\!-\!z^{k-1}\|^2_{\mathcal{T}}.
 \end{align*}
  Substituting this equality into inequality (\ref{ADMM-HPE-eq3}) and noting that $\tau\in(0,1)$, we obtain that
 \begin{align*}
 &\ \sigma_{\!\tau}D_{\mathcal{G}}(\widetilde{w}^{k+1},w^k)-D_{\mathcal{G}}(w^{k+1},\widetilde{w}^{k+1})\nonumber\\
 &\ge \widetilde{\eta}_{k+1}-\frac{(1\!-\!\gamma_1)(\sigma_{\!\tau}\!-\!(\tau\!-\!1)^2)\beta}{2\tau}
      \|\mathcal{A}^*y^{k}\!+\!\mathcal{B}^*z^{k}\!-\!c\|^2
    -\frac{(1\!-\!\gamma_2)\nu_{\!\tau}}{2\tau}\|z^{k}\!-\!z^{k-1}\|^2_{\mathcal{T}}\nonumber\\
% &\ge \widetilde{\eta}_{k+1}-\frac{(1\!-\!\gamma_1)(\sigma_{\!\tau}\!-\!(\tau\!-\!1)^2)\beta}{2\tau}
%     \|\mathcal{A}^*y^{k}\!+\!\mathcal{B}^*z^{k}\!-\!c\|^2
%    -\frac{(1\!-\!\gamma_2)\nu_{\tau}}{2\tau}\|z^{k}\!-\!z^{k-1}\|^2_{\mathcal{T}+\Sigma_{\varphi_{g}}}\nonumber\\
 & \geq \widetilde{\eta}_{k+1}-(1\!-\!\gamma_2)\widetilde{\eta}_{k}=\widetilde{\eta}_{k+1}-(1-\gamma_{\tau})\widetilde{\eta}_{k},
 \end{align*}
 where the second inequality is due to $1>\gamma_1\ge\gamma_2>0$ implied by equation \eqref{sigma-h1h2-equa1} and
 \begin{equation}\label{etak-ineq}
 \widetilde{\eta}_{k}\!\ge\!\frac{(\sigma_{\!\tau}-(\tau\!-\!1)^2)\beta}{2\tau}\|\mathcal{A}^*y^{k}\!+\!\mathcal{B}^*z^{k}\!-\!c\|^2 +\frac{\nu_{\!\tau}}{2\tau}\|z^{k}\!-\!z^{k-1}\|^2_{\mathcal{T}}.
 \end{equation}
 and the last one is due to $\gamma_{\tau}=\gamma_2$ implied by the definition of $\gamma_{\tau}$ and equation \eqref{sigma-h1h2-equa2}.

 \medskip
 \noindent
 {\bf Case 2:} $\tau \in [1,\frac{1+\sqrt{5}}{2})$.
 Let $\gamma_1=\frac{\sigma_{\!\tau}-(\tau+1)(\tau-1)^2}{\tau(\sigma_{\!\tau}-(\tau-1)^2)}$
 and $\gamma_2= \frac{\tau\sigma_{\!\tau}}{\nu_{\tau}}$. Then we have that
 $(\sigma_{\!\tau}\!+\!\tau\!-\!1)(1\!-\!\tau^{-1})=(1\!-\!\gamma_1)(\sigma_{\!\tau}\!-\!(\tau\!-\!1)^2)$
 and $(\sigma_{\!\tau}\!+\!\tau\!-\!1)=(1\!-\!\gamma_2)\nu_{\tau}$, which implies that
 \begin{align*}
  &\ \frac{(\sigma_{\!\tau}+\tau\!-\!1)(1\!-\!\tau^{-1})\beta}{2\tau}\|\mathcal{A}^*y^{k}\!+\!\mathcal{B}^*z^{k}\!-\!c\|^2 +\frac{\sigma_{\!\tau}+\tau\!-\!1}{2\tau}\|z^{k}\!-\!z^{k-1}\|^2_{\mathcal{T}}\nonumber\\
  &= \frac{(1\!-\!\gamma_1)(\sigma_{\!\tau}\!-\!(\tau\!-\!1)^2)\beta}{2\tau}\|\mathcal{A}^*y^{k}\!+\!\mathcal{B}^*z^{k}\!-\!c\|^2 +\frac{(1\!-\!\gamma_2)\nu_{\!\tau}}{2\tau}\|z^{k}\!-\!z^{k-1}\|^2_{\mathcal{T}}.
 \end{align*}
 Substituting this equality into inequality (\ref{ADMM-HPE-eq3}) and noting that $\tau\in[1,\frac{1+\sqrt{5}}{2})$ yields that
\begin{align*}
 &\ \sigma_{\!\tau} D_{\mathcal{G}}(\widetilde{w}^{k+1},w^{k})-D_{\mathcal{G}}(w^{k+1},\widetilde{w}^{k+1})\nonumber\\
 &\ge \widetilde{\eta}_{k+1}-\frac{(1\!-\!\gamma_1)(\sigma_{\!\tau}\!-\!(\tau\!-\!1)^2)\beta}{2\tau}
      \|\mathcal{A}^*y^{k}\!+\!\mathcal{B}^*z^{k}\!-\!c\|^2
    -\frac{(1\!-\!\gamma_2)\nu_{\!\tau}}{2\tau}\|z^{k}\!-\!z^{k-1}\|^2_{\mathcal{T}}\nonumber\\
 & \ge \widetilde{\eta}_{k+1}-\max\{1\!-\!\gamma_1, 1\!-\!\gamma_2\}\widetilde{\eta}_{k}
    =\widetilde{\eta}_{k+1}-(1-\gamma_{\tau})\widetilde{\eta}_{k},
\end{align*}
 where the second inequality is due to inequality \eqref{etak-ineq} and
 $\gamma_1,\gamma_2\in(0,1)$ implied by \eqref{sigma-h1h2-equa1} and \eqref{sigma-h1h2-equa3},
 and the last one is by the definition of $\gamma_{\tau}$ and equation \eqref{sigma-h1h2-equa3}.
 \end{proof}
%-------------------------------------------------------------------------------------------Remark
 \begin{remark}\label{remark-sequence}
 If $\mathcal{G}$ is positive definite, then \eqref{inexact-a}-\eqref{inexact-b}
 can be equivalently written as
 \begin{subnumcases}{}\label{inexact-aa}
  \!r^{k+1}\in\! T(\widetilde{w}^{k+1}),\\
   \label{inexact-bb}
  \frac{1}{2}\big[\|r^{k+1}\!+\!\mathcal{G}(\widetilde{w}^{k+1}\!-\!w^{k})\|_{\mathcal{G}^{-1}}\big]^2\!+\widetilde{\eta}_{k+1}
  \le\!\frac{\sigma_{\!\tau}}{2}\big[\|\mathcal{G}(\widetilde{w}^{k+1}\!-\!w^{k})
  \|_{\mathcal{G}^{-1}}\big]^2\!+\!(1\!-\!\gamma_{\tau})\widetilde{\eta}_{k},\quad \\
  \label{inexact-cc}
  w^{k+1}=w^{k}-\mathcal{G}^{-1}r^{k+1},
 \end{subnumcases}
  where equation \eqref{inexact-aa}-\eqref{inexact-bb} is solving approximately
  the positive definite proximal subproblem $0\in T(\cdot)+\mathcal{G}(\cdot-w^{k-1})$ with the error $\frac{1}{2}\big[\|r^{k+1}\!+\!\mathcal{G}(\widetilde{w}^{k+1}\!-\!w^{k})\|_{\mathcal{G}^{-1}}\big]^2$
  satisfying \eqref{inexact-bb}, and \eqref{inexact-cc} is a scaled extra-gradient step
  to guarantee the convergence of $\{w^k\}$. Motivated by this, we call
  \eqref{inexact-a}-\eqref{inexact-b} a generalized HPE iteration formula.
  However, it should be emphasized that since $\mathcal{G}$ here is only positive semidefinite,
  the above iteration formula neither belongs to the HPE method \cite{SS99,SS01}
  nor falls into the non-Euclidean HPE framework studied in \cite{GMM2016}
  since the operator $S$ there can not degenerate to zero.
 \end{remark}

  By Proposition \ref{Prop1-sequence}, we have the following two results of the sequence $\{(y^k,z^k,x^k)\}$.
%------------------------------------------------------------------------------------------------
 \begin{proposition}\label{Prop2-sequence}
  Let $\{(y^{k},z^{k},x^{k})\}$ be the sequence generated by equation (\ref{y-sPADMM})-(\ref{x-sPADMM})
  with $\tau\in(0,\frac{1+\sqrt{5}}{2})$. Then, for any $w\in\mathbb{H}$,
  the following inequality holds for all $k\ge 1$:
  \begin{equation}\label{contractive-eq1}
   D_{\mathcal{G}}(w,w^{k})\!-\!{D}_{\mathcal{G}}(w,w^{k+1})+(1\!-\!\gamma_{\tau})\widetilde{\eta}_{k}
   \ge (1\!-\!\sigma_{\!\tau}){D}_{\mathcal{G}}(\widetilde{w}^{k+1},w^{k})
   +\langle r^{k+1},\widetilde{w}^{k+1}\!-\!w\rangle+\widetilde{\eta}_{k+1}.
  \end{equation}
  In particular, for any $w^*\in{T}^{-1}(0)$, the following inequality holds for $k\ge 1$:
  \begin{equation} \label{contractive-eq2}
   {D}_{\mathcal{M}}(w^*,w^{k+1})+\eta_{k+1} \le {D}_{\mathcal{M}}(w^*,w^{k})-
   (1\!-\!\sigma_{\!\tau}){D}_{\mathcal{G}}(\widetilde{w}^{k+1},w^{k})+(1\!-\!\gamma_{\tau})\eta_{k}.
  \end{equation}
 \end{proposition}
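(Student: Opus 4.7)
My plan has two stages: first establish \eqref{contractive-eq1} via a standard three-point expansion of the $\mathcal{G}$-weighted squared distance, then specialize to $w=w^{*}\in T^{-1}(0)$ and convert the resulting $\Sigma$-term into the $\mathcal{M}$-distance/\,$\eta_{k+1}$ form required by \eqref{contractive-eq2}.

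\emph{Stage 1 (proof of \eqref{contractive-eq1}).} For any $w\in\mathbb{H}$, I apply the three-point identity for the seminorm $\|\cdot\|_{\mathcal{G}}$ centered at $\widetilde{w}^{k+1}$:
\[
  D_{\mathcal{G}}(w,w^{k})-D_{\mathcal{G}}(w,w^{k+1})
  =\langle\mathcal{G}(w^{k+1}-w^{k}),\widetilde{w}^{k+1}-w\rangle
  +D_{\mathcal{G}}(\widetilde{w}^{k+1},w^{k})-D_{\mathcal{G}}(\widetilde{w}^{k+1},w^{k+1}).
\]
Since $r^{k+1}=\mathcal{G}(w^{k}-w^{k+1})$ by definition, the linear term is exactly $\langle r^{k+1},\widetilde{w}^{k+1}-w\rangle$. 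Using \eqref{inexact-b} of Proposition \ref{Prop1-sequence} to upper-bound $D_{\mathcal{G}}(\widetilde{w}^{k+1},w^{k+1})$ by $\sigma_{\!\tau}D_{\mathcal{G}}(\widetilde{w}^{k+1},w^{k})+(1-\gamma_{\tau})\widetilde{\eta}_{k}-\widetilde{\eta}_{k+1}$ and rearranging yields \eqref{contractive-eq1}.

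\emph{Stage 2 (proof of \eqref{contractive-eq2}).} Specialize \eqref{contractive-eq1} at $w=w^{*}\in T^{-1}(0)$. Because $r^{k+1}\in T(\widetilde{w}^{k+1})$ and $0\in T(w^{*})$, the monotonicity estimate \eqref{Tmonotone} gives $\langle r^{k+1},\widetilde{w}^{k+1}-w^{*}\rangle\ge\|\widetilde{w}^{k+1}-w^{*}\|_{\Sigma}^{2}$. The crucial observation is that $\Sigma$ annihilates the $x$-block and $\widetilde{w}^{k+1}$ shares its $y,z$-components with $w^{k+1}$, so $\|\widetilde{w}^{k+1}-w^{*}\|_{\Sigma}^{2}=\|w^{k+1}-w^{*}\|_{\Sigma}^{2}=2D_{\Sigma}(w^{*},w^{k+1})$. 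Then I apply the parallelogram-type inequality (valid for any seminorm)
\[
  D_{\Sigma}(w^{*},w^{k})+D_{\Sigma}(w^{*},w^{k+1})\;\ge\;\tfrac{1}{4}\|w^{k+1}-w^{k}\|_{\Sigma}^{2},
\]
which rearranges to $2D_{\Sigma}(w^{*},w^{k+1})\ge D_{\Sigma}(w^{*},w^{k+1})-D_{\Sigma}(w^{*},w^{k})+\tfrac{1}{4}\|w^{k+1}-w^{k}\|_{\Sigma}^{2}$.

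\emph{Stage 3 (bookkeeping into $\mathcal{M}$ and $\eta$).} Substituting these two estimates into \eqref{contractive-eq1}, the $D_{\Sigma}(w^{*},w^{k+1})$ combines with $D_{\mathcal{G}}(w^{*},w^{k+1})$ into $D_{\mathcal{M}}(w^{*},w^{k+1})$ on the left (and likewise on the right, with the sign from the parallelogram inequality precisely matching what is needed), while the leftover $\tfrac{1}{4}\|w^{k+1}-w^{k}\|_{\Sigma}^{2}$ merges with $\widetilde{\eta}_{k+1}$ to form $\eta_{k+1}$. Finally, since $\eta_{k}-\widetilde{\eta}_{k}=\tfrac{1}{4}\|w^{k}-w^{k-1}\|_{\Sigma}^{2}\ge 0$ and $1-\gamma_{\tau}>0$, I may replace $(1-\gamma_{\tau})\widetilde{\eta}_{k}$ by $(1-\gamma_{\tau})\eta_{k}$ on the right to reach \eqref{contractive-eq2}.

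The main obstacle I anticipate is the delicate bookkeeping in Stage 3: one needs the parallelogram inequality applied in exactly the right direction so that one copy of $D_{\Sigma}(w^{*},w^{k+1})$ gets absorbed to complete $D_{\mathcal{M}}(w^{*},w^{k})$ on the right while the residual $\tfrac{1}{4}\|w^{k+1}-w^{k}\|_{\Sigma}^{2}$ remains on the left to upgrade $\widetilde{\eta}_{k+1}$ to $\eta_{k+1}$. The fact that $\Sigma$ kills the $x$-coordinate of $\widetilde{w}^{k+1}-w^{*}$ is what makes this alignment possible.
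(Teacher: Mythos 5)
Your proof is correct and follows essentially the same route as the paper: the three-point expansion of $D_{\mathcal{G}}$ combined with \eqref{inexact-b} yields \eqref{contractive-eq1}, and then monotonicity of $T$ together with the elementary bound $\|w^{k+1}\!-\!w^*\|_{\Sigma}^2+\|w^{k}\!-\!w^*\|_{\Sigma}^2\ge\frac{1}{2}\|w^{k+1}\!-\!w^{k}\|_{\Sigma}^2$ (your parallelogram inequality is exactly the paper's estimate in disguise) absorbs the $\Sigma$-terms into $D_{\mathcal{M}}$ and $\eta_{k+1}$, finishing with $\widetilde{\eta}_k\le\eta_k$. The only nit is a sign typo in your displayed three-point identity, whose linear term should be $\langle\mathcal{G}(w^{k}\!-\!w^{k+1}),\widetilde{w}^{k+1}\!-\!w\rangle=\langle r^{k+1},\widetilde{w}^{k+1}\!-\!w\rangle$, as your very next sentence already assumes.
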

 \begin{proof}
  From the definition of $D_{\mathcal{G}}(\cdot,\cdot)$ and inequality \eqref{inexact-b},
  it immediately follows that
  \begin{align*}
   &\ {D}_{\mathcal{G}}(w,w^{k})\!-\!{D}_{\mathcal{G}}(w,w^{k+1})+(1\!-\!\gamma_{\tau})\widetilde{\eta}_{k}
   +\sigma_{\!\tau}{D}_{\mathcal{G}}(\widetilde{w}^{k+1},w^{k})\\
   &\ge {D}_{\mathcal{G}}(w,w^{k})\!-\!{D}_{\mathcal{G}}(w,w^{k+1})
        +{D}_{\mathcal{G}}(\widetilde{w}^{k+1},w^{k+1})+\widetilde{\eta}_{k+1}\\
   &={D}_{\mathcal{G}}(\widetilde{w}^{k+1},w^{k})+\langle r^{k+1},\widetilde{w}^{k+1}\!-\!w\rangle
      +\widetilde{\eta}_{k+1},
  \end{align*}
  which implies that inequality \eqref{contractive-eq1} holds.
  Taking $w=w^*\in T^{-1}(0)$ in \eqref{contractive-eq1}, we have
  \[
   D_{\mathcal{G}}(w^*,w^{k})\!-\!{D}_{\mathcal{G}}(w^*,w^{k+1})+(1\!-\!\gamma_{\tau})\widetilde{\eta}_{k}
   \ge (1\!-\!\sigma_{\!\tau}){D}_{\mathcal{G}}(\widetilde{w}^{k+1},w^{k})
   +\langle r^{k+1},\widetilde{w}^{k+1}\!-\!w^*\rangle+\widetilde{\eta}_{k+1}.
  \]
  Notice that  $0\in T(w^*)$ and $\mathcal{G}(w^{k}\!-\!w^{k+1}) \in T(\widetilde{w}^{k+1})$.
  It follows from \eqref{Tmonotone} that
  \[
    \langle r^{k+1},\widetilde{w}^{k+1}\!-\!w^*\rangle=\langle\mathcal{G}(w^{k}\!-\!w^{k+1}),\widetilde{w}^{k+1}\!-\!w^*\rangle
    \ge \|\widetilde{w}^{k+1}-w^*\|_{\Sigma}^2= \|w^{k+1}-w^*\|_{\Sigma}^2.
  \]
  By recalling that $\mathcal{M}=\mathcal{G}+\Sigma$, the last two inequalities imply that
  \begin{align*}
  &\  D_{\mathcal{M}}(w^*,w^{k+1})+\widetilde{\eta}_{k+1}\nonumber\\
  &\le D_{\mathcal{G}}(w^*,w^{k})-(1\!-\!\sigma_{\!\tau}){D}_{\mathcal{G}}(\widetilde{w}^{k+1},w^{k})
      \!+\!(1\!-\!\gamma_{\tau})\widetilde{\eta}_{k}-\frac{1}{2}\|w^{k+1}\!-\!w^*\|_{\Sigma}^2\\
   &\le D_{\mathcal{M}}(w^*,w^{k})-(1\!-\!\sigma_{\!\tau}){D}_{\mathcal{G}}(\widetilde{w}^{k+1},w^{k})
       \!+\!(1\!-\!\gamma_{\tau})\widetilde{\eta}_{k}\!-\frac{1}{4}\|w^{k+1}\!-\!w^k\|_{\Sigma}^2,
  \end{align*}
  where the last inequality is due to $\|w^{k+1}\!-\!w^*\|_{\Sigma}^2\ge \frac{1}{2}\|w^{k+1}\!-\!w^k\|_{\Sigma}^2 -\|w^{k}\!-\!w^*\|_{\Sigma}^2$.
  Recalling that $\eta_{k+1}=\widetilde{\eta}_{k+1}+\frac{1}{4}\|w^{k+1}\!-\!w^k\|_{\Sigma}^2$,
  from the last inequality we obtain that
  \[
    D_{\mathcal{M}}(w^*,w^{k+1})+\eta_{k+1}
    \le D_{\mathcal{M}}(w^*,w^{k})-(1\!-\!\sigma_{\!\tau}){D}_{\mathcal{G}}(\widetilde{w}^{k+1},w^{k})
        +(1\!-\!\gamma_{\tau})\widetilde{\eta}_{k},
%    &\le D_{\mathcal{M}}(w^*,w^{k})-(1\!-\!\sigma_{\!\tau}){D}_{\mathcal{G}}(\widetilde{w}^{k+1},w^{k})+(1\!-\!\gamma_{\tau})\eta_{k}.
  \]
  which along with $\widetilde{\eta}_{k}\le \eta_{k}$ for all $k\ge 1$ implies
  the desired inequality \eqref{contractive-eq2}.
 \end{proof}
%-------------------------------------------------------------------------------------------
 \begin{proposition}\label{ergodic-sequence}
  Let $\{(y^{k},z^{k},x^{k})\}$ be given by (\ref{y-sPADMM})-(\ref{x-sPADMM})
  with $\tau\in(0,\frac{1+\sqrt{5}}{2})$. For each $k\ge 1$, let $\alpha_i \ge 0$ for $i=1,2,\ldots,k\!+\!1$
  be such that $\alpha_1\!=0$ and $\sum_{i=1}^{k}\alpha_{i+1}\!=1$ and define
 \begin{equation*}
  \overline{w}^{k}:=\sum_{i=1}^{k}\alpha_{i+1}\widetilde{w}^{i+1},\
  \overline{r}^{k}:=\sum_{i=1}^{k}\alpha_{i+1}r^{i+1}\ {\rm and}\
  \overline{\epsilon}^{k}:= \sum_{i=1}^{k}\alpha_{i+1}\langle \widetilde{w}^{i+1}\!-\!\overline{w}^{k},r^{i+1}\!-\!\overline{r}^{k}\rangle.
 \end{equation*}
  Then, the following relations hold for all $k\ge 1$:
 \begin{subnumcases}{}
 \overline{r}^{k} \in T^{[\overline{\epsilon}^{k}]}(\overline{w}^{k})\ \ {\rm with}\ \overline{\epsilon}^{k}\ge 0,\\
 \label{rkbound}
   \|\overline{r}^{k}\|
  \le\big(2\sqrt{\|\mathcal{G}\|}\sqrt{d^*\!+\!d_1\!+\!\eta_{1}}+\|\mathcal{G}(\widehat{w}^{*})\|\big)
     \left(\textstyle\sum_{i=1}^{k}\big|\alpha_{i}\!-\!\alpha_{i+1}\big|\!+\!\alpha_{k+1}\right),\\
  \label{epskbound}
  \overline{\epsilon}^{k}
   \le 2(d^*\!+\!d_1\!+\!\eta_1)\frac{(11-9\sigma_{\!\tau})}{1-\sigma_{\!\tau}}\sum_{i=1}^{k}|\alpha_{i+1}\!-\!\alpha_{i}|
 \end{subnumcases}
 where $d^*:=\min_{w\in T^{-1}(0)}{D}_{\mathcal{M}}(w,w^{0})={D}_{\mathcal{M}}(\widehat{w}^*,w^{0})$,
 $d_1:={D}_{\mathcal{M}}(w^1,w^{0})$, and
  $T^{[\varepsilon]}(w):=\big\{v\in\mathbb{H}\ |\ \langle v'\!-\!v,w'\!-\!w\rangle\ge -\varepsilon\ \ {\rm for\ all}\
    w'\in\mathbb{H},v'\in T(w')\big\}$ is the $\varepsilon$-enlargement of $T$ at $w$.
 \end{proposition}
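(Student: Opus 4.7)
\textbf{Plan for proving Proposition \ref{ergodic-sequence}.} The three claims split naturally into one ``qualitative'' statement (the enlargement inclusion) and two ``quantitative'' bounds. For the first claim, I would invoke the transportation formula for the $\varepsilon$-enlargement of a monotone operator (Burachik--Iusem--Svaiter): given $r^{i+1}\in T(\widetilde{w}^{i+1})$ from Proposition~\ref{Prop1-sequence}(a) and nonnegative weights $\alpha_{i+1}$ summing to $1$, the convex combination satisfies $\overline{r}^{k}\in T^{[\overline{\epsilon}^{k}]}(\overline{w}^{k})$ with precisely the expression defining $\overline{\epsilon}^{k}$. Nonnegativity of $\overline{\epsilon}^k$ is then immediate from the symmetrization identity
\[
  \overline{\epsilon}^{k}
  =\frac{1}{2}\sum_{i,j=1}^{k}\alpha_{i+1}\alpha_{j+1}\bigl\langle \widetilde{w}^{i+1}-\widetilde{w}^{j+1},\,r^{i+1}-r^{j+1}\bigr\rangle\ge 0,
\]
where each pairwise term is nonnegative by monotonicity of $T$ (equation \eqref{Tmonotone}).

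For the bound on $\|\overline{r}^{k}\|$, I would use $r^{i+1}=\mathcal{G}(w^{i}-w^{i+1})$ together with Abel summation. Setting $\alpha_1=0$ gives
\[
  \overline{r}^{k}=\mathcal{G}\!\left[\sum_{i=1}^{k}(\alpha_{i+1}-\alpha_i)w^i-\alpha_{k+1}w^{k+1}\right],
\]
and after splitting $w^i=(w^i-\widehat{w}^*)+\widehat{w}^*$ the triangle inequality yields $\|\overline{r}^k\|\le\bigl(\max_{1\le i\le k+1}\|\mathcal{G}(w^i-\widehat{w}^*)\|+\|\mathcal{G}\widehat{w}^*\|\bigr)\bigl(\sum|\alpha_{i+1}-\alpha_i|+\alpha_{k+1}\bigr)$. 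I then control $\|\mathcal{G}(w^i-\widehat{w}^*)\|\le\sqrt{\|\mathcal{G}\|}\,\|w^i-\widehat{w}^*\|_{\mathcal{G}}\le\sqrt{2\|\mathcal{G}\|}\sqrt{D_{\mathcal{M}}(\widehat{w}^*,w^i)}$ and establish the uniform estimate $D_{\mathcal{M}}(\widehat{w}^*,w^i)\le 2(d^*+d_1+\eta_1)$ for every $i\ge 1$: telescoping \eqref{contractive-eq2} from $i=1$ up to the current index yields $D_{\mathcal{M}}(\widehat{w}^*,w^{i})\le D_{\mathcal{M}}(\widehat{w}^*,w^{1})+\eta_1$, and the triangle inequality $D_{\mathcal{M}}(\widehat{w}^*,w^1)\le 2d^*+2d_1$ finishes the bound. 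This produces exactly the factor $2\sqrt{\|\mathcal{G}\|}\sqrt{d^*+d_1+\eta_1}$ in \eqref{rkbound}.

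For the bound on $\overline{\epsilon}^{k}$, the strategy is to apply \eqref{contractive-eq1} with the fixed test point $w=\overline{w}^{k}$, multiply by $\alpha_{i+1}$, and sum over $i=1,\dots,k$. Since $\sum_{i}\alpha_{i+1}(\widetilde{w}^{i+1}-\overline{w}^{k})=0$, the linear terms collapse exactly to $\overline{\epsilon}^{k}=\sum_{i}\alpha_{i+1}\langle r^{i+1},\widetilde{w}^{i+1}-\overline{w}^{k}\rangle$, and after discarding the nonnegative contribution $(1-\sigma_{\!\tau})\sum\alpha_{i+1}D_{\mathcal{G}}(\widetilde{w}^{i+1},w^{i})$ one obtains
\[
  \overline{\epsilon}^{k}\le\sum_{i=1}^{k}\alpha_{i+1}\bigl[D_{\mathcal{G}}(\overline{w}^{k},w^{i})-D_{\mathcal{G}}(\overline{w}^{k},w^{i+1})\bigr]+\sum_{i=1}^{k}\alpha_{i+1}\bigl[(1-\gamma_{\tau})\widetilde{\eta}_{i}-\widetilde{\eta}_{i+1}\bigr].
\]
Applying Abel's summation to the first sum produces the factor $|\alpha_{i+1}-\alpha_i|$ times $D_{\mathcal{G}}(\overline{w}^{k},w^{i})$, which I bound via the triangle inequality $D_{\mathcal{G}}(\overline{w}^{k},w^{i})\le 2D_{\mathcal{G}}(\overline{w}^{k},\widehat{w}^*)+2D_{\mathcal{G}}(\widehat{w}^*,w^{i})$; the second summand is already controlled by $2(d^*+d_1+\eta_1)$, while for the first I use Jensen $D_{\mathcal{G}}(\overline{w}^{k},\widehat{w}^*)\le\sum\alpha_{i+1}D_{\mathcal{G}}(\widetilde{w}^{i+1},\widehat{w}^*)$ and then $D_{\mathcal{G}}(\widetilde{w}^{i+1},\widehat{w}^*)\le 2D_{\mathcal{G}}(\widetilde{w}^{i+1},w^{i})+2D_{\mathcal{G}}(\widehat{w}^*,w^{i})$. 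The crucial ingredient is that telescoping \eqref{contractive-eq2} supplies $\sum_{i=1}^{k}D_{\mathcal{G}}(\widetilde{w}^{i+1},w^{i})\le\tfrac{2(d^*+d_1+\eta_1)}{1-\sigma_{\!\tau}}$, which is where the denominator $1-\sigma_{\!\tau}$ in the target constant comes from. Handling the $\widetilde{\eta}_i$-telescoping (similarly by Abel summation and the identity $(1-\gamma_{\tau})\alpha_{i+1}-\alpha_{i+1}\le|\alpha_{i+1}-\alpha_i|$ after absorbing $\gamma_{\tau}\alpha_{i+1}\widetilde{\eta}_i$) contributes additional multiples of $(d^*+d_1+\eta_1)$; collecting all constants carefully yields the coefficient $\tfrac{2(11-9\sigma_{\!\tau})}{1-\sigma_{\!\tau}}$ stated in \eqref{epskbound}.

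The main obstacle will be precisely this last bookkeeping step: tracking the exact numerical constant $\tfrac{11-9\sigma_{\!\tau}}{1-\sigma_{\!\tau}}$ requires combining the Jensen bound on $D_{\mathcal{G}}(\overline{w}^{k},\widehat{w}^*)$, the telescoped sum of $D_{\mathcal{G}}(\widetilde{w}^{i+1},w^{i})$, and the non-telescoping $\widetilde{\eta}_i$ sum simultaneously, and verifying that no information is lost when the negative terms $-\alpha_{k+1}D_{\mathcal{G}}(\overline{w}^k,w^{k+1})$ and $-\alpha_{k+1}\widetilde{\eta}_{k+1}$ are discarded. Everything else is routine Abel summation and triangle estimates given the one-step recursion supplied by Proposition~\ref{Prop2-sequence}.
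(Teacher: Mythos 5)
Your plan reproduces the paper's proof essentially step for step: the transportation formula of Burachik--Sagastiz\'abal--Svaiter for the inclusion $\overline{r}^{k}\in T^{[\overline{\epsilon}^{k}]}(\overline{w}^{k})$, Abel summation plus the uniform bound ${D}_{\mathcal{M}}(\widehat{w}^*,w^{i})\le {D}_{\mathcal{M}}(\widehat{w}^*,w^{1})+\eta_1\le 2(d^*+d_1+\eta_1)$ for \eqref{rkbound}, and testing the one-step inequality at $w=\overline{w}^{k}$, Abel summation, and the bound $(1-\sigma_{\!\tau}){D}_{\mathcal{G}}(\widetilde{w}^{j+1},w^{j})\le 2(d^*+d_1+\eta_1)$ for \eqref{epskbound}; your symmetrization identity for $\overline{\epsilon}^{k}\ge 0$ is a nice self-contained substitute for the paper's citation. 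The one point where your route deviates is the estimate of ${D}_{\mathcal{G}}(\overline{w}^{k},w^{i})$: splitting first through $\widehat{w}^*$ and then applying Jensen gives $\tfrac{20-12\sigma_{\!\tau}}{1-\sigma_{\!\tau}}(d^*+d_1+\eta_1)$, hence a final coefficient $\tfrac{22-14\sigma_{\!\tau}}{1-\sigma_{\!\tau}}$, which is strictly larger than the stated $\tfrac{2(11-9\sigma_{\!\tau})}{1-\sigma_{\!\tau}}=\tfrac{22-18\sigma_{\!\tau}}{1-\sigma_{\!\tau}}$; to recover the exact constant you should, as the paper does, apply Jensen first, ${D}_{\mathcal{G}}(\overline{w}^{k},w^{i})\le\sum_j\alpha_{j+1}{D}_{\mathcal{G}}(\widetilde{w}^{j+1},w^{i})$, and only then split each term through $w^{j}$ and $\widehat{w}^*$.
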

 \begin{proof}
  For $k\ge 1$, we know from equation \eqref{inexact-a} that
  $r^{i}\in T(\widetilde{w}^i)$ for $i=2,\ldots,k+1$.
  By the maximal monotonicity of $T$, using the weak transportation formula
  \cite[Theorem 2.3]{BSS99} yields that $\overline{\epsilon}^k\ge 0$ and
  $\overline{r}^{k}\in T^{[\overline{\epsilon}^{k}]}(\overline{w}^{k})$ for $k\ge 1$.
  We next establish inequality \eqref{rkbound}.
  From the definition of $r^{i+1}$, it follows that
 \begin{align}\label{weight-complexity-eq1}
  \|\overline{r}^{k}\|
  &\le\big\|{\textstyle\sum_{i=1}^{k}}\big(\alpha_{i+1}\mathcal{G}(w^{i+1})
       \!-\!\alpha_{i}\mathcal{G}(w^{i})\big)\big\|
       \!+\!\big\|{\textstyle\sum_{i=1}^{k}}\big(\alpha_{i}\!-\!\alpha_{i+1}\big)\mathcal{G}(w^{i})\big\|\nonumber\\
  &=\big\|\alpha_{k+1}\mathcal{G}(w^{k+1})\big\|
     \!+\!\big\|{\textstyle\sum_{i=1}^{k}}\big(\alpha_{i}\!-\!\alpha_{i+1}\big)\mathcal{G}(w^{i})\big\|.
 \end{align}
  From (\ref{contractive-eq2}), it follows that
 ${D}_{\mathcal{M}}(\widehat{w}^*,w^{k})\le{D}_{\mathcal{M}}(\widehat{w}^*,w^{1})+\eta_{1}$ for $k\ge 1$.
 So, we have that
 \begin{align}\label{weight-complexity-eq2}
  \|\alpha_{k+1}\mathcal{G}(w^{k+1})\|
  &\le \alpha_{k+1}\|\mathcal{G}(w^{k+1})\!-\!\mathcal{G}(\widehat{w}^*)\|
       +\alpha_{k+1}\|\mathcal{G}(\widehat{w}^*)\| \nonumber\\
  &\le \alpha_{k+1}{\sqrt{2\|\mathcal{G}\|}}\sqrt{{D}_{\mathcal{G}}(\widehat{w}^*,w^{k+1})}
        \!+\alpha_{k+1}\|\mathcal{G}(\widehat{w}^*)\| \nonumber\\
  &\le \big(\sqrt{2\|\mathcal{G}\|}\sqrt{{D}_{\mathcal{M}}(\widehat{w}^*,w^{1})+\eta_{1}}+\|\mathcal{G}(\widehat{w}^*)\|\big)
       \alpha_{k+1}.
 \end{align}
 Using the inequality ${D}_{\mathcal{M}}(\widehat{w}^*,w^{k})\le{D}_{\mathcal{M}}(\widehat{w}^*,w^{1})+\eta_{1}$ for $k\ge 1$ again,
 we obtain that
 \begin{align*}
  \big\|{\textstyle\sum_{i=1}^{k}}\big(\alpha_{i}\!-\!\alpha_{i+1}\big)\mathcal{G}(w^{i})\big\|
  &\le\textstyle\sum_{i=1}^{k}\big[|\alpha_{i}\!-\!\alpha_{i+1}|\|\mathcal{G}(w^{i}\!-\!\widehat{w}^*)\|
          \!+\!|\alpha_{i}\!-\!\alpha_{i+1}|\|\mathcal{G}(\widehat{w}^*)\|\big]\nonumber\\
  &\le \sum_{i=1}^{k}\Big[|\alpha_{i}\!-\!\alpha_{i+1}|\sqrt{2\|\mathcal{G}\|}\sqrt{{D}_{\mathcal{G}}(\widehat{w}^*,w^{i})}
         \!+\!|\alpha_{i}\!-\!\alpha_{i+1}|\|\mathcal{G}(\widehat{w}^*)\|\Big]\nonumber\\
  &\le \Big(\!\sqrt{2\|\mathcal{G}\|}\sqrt{{D}_{\mathcal{M}}(\widehat{w}^*,w^{1})\!+\!\eta_{1}}+\!\|\mathcal{G}(\widehat{w}^*)\|\Big)
       {\textstyle\sum_{i=1}^{k}}\big|\alpha_{i}\!-\!\alpha_{i+1}\big|.
 \end{align*}
 Combining this with inequalities (\ref{weight-complexity-eq2}) and (\ref{weight-complexity-eq1})
 and noting that ${D}_{\mathcal{M}}(\widehat{w}^*,w^{1})\le 2d^*+2d_1$,
 we obtain the desired inequality \eqref{rkbound}.

 \medskip

Next we show that inequality \eqref{epskbound} holds.
 From the definition of $\overline{\epsilon}^{k}$, it follows that
 \begin{equation}\label{weight-complexity-eq5}
  \overline{\epsilon}^{k}
   =\textstyle \sum_{i=1}^{k} \alpha_{i+1}\langle \widetilde{w}^{i+1}\!-\!\overline{w}^{k},r^{i+1}\rangle.
 \end{equation}
 Notice that $r^{i+1}=\mathcal{G}(w^{i}\!-\!w^{i+1})$ by \eqref{inexact-a}.
 After an elementary calculation, we have that
 \begin{align}\label{equality}
    \langle \widetilde{w}^{i+1}\!-\!\overline{w}^{k},r^{i+1}\rangle
    ={D}_{\mathcal{G}}(\widetilde{w}^{i+1},w^{i+1})\!-\!{D}_{\mathcal{G}}(\widetilde{w}^{i+1},w^{i})
    +{D}_{\mathcal{G}}(\overline{w}^{k},w^{i})\!-\!{D}_{\mathcal{G}}(\overline{w}^{k},w^{i+1}).
 \end{align}
 Since $D_{\mathcal{G}}(\widetilde{w}^{i+1},w^{i})\!-\!{D}_{\mathcal{G}}(\widetilde{w}^{i+1},w^{i+1})+\widetilde{\eta}_{i}-\widetilde{\eta}_{i+1}\ge 0$
 implied by \eqref{contractive-eq1}, it follows that
 \[
 \sum_{i=1}^{k}\alpha_{i+1}\big[{D}_{\mathcal{G}}(\widetilde{w}^{i+1},w^{i+1})-D_{\mathcal{G}}(\widetilde{w}^{i+1},w^{i})\big]
  \le \sum_{i=1}^{k}\alpha_{i+1}\big[\widetilde{\eta}_{i}-\widetilde{\eta}_{i+1}\big]
  \le\sum_{i=1}^{k}(\alpha_{i+1}-\alpha_i)\widetilde{\eta}_{i}.
 \]
 Combining this inequality with equality \eqref{equality} and inequality \eqref{weight-complexity-eq5}, we obtain that
 \begin{align}\label{weight-complexity-eq9}
  &\ \overline{\epsilon}^{k}
  =\sum_{i=1}^{k}\alpha_{i+1}\big[{D}_{\mathcal{G}}(\widetilde{w}^{i+1},w^{i+1})-D_{\mathcal{G}}(\widetilde{w}^{i+1},w^{i})\big]
  +\sum_{i=1}^{k}\alpha_{i+1}\big[{D}_{\mathcal{G}}(\overline{w}^{k},w^{i})\!-\!{D}_{\mathcal{G}}(\overline{w}^{k},w^{i+1})\big]
  \nonumber\\
  &\le \sum_{i=1}^{k}(\alpha_{i+1}-\alpha_i)\widetilde{\eta}_{i}+
  \sum_{i=1}^{k}\left[(\alpha_{i+1}\!-\!\alpha_{i}){D}_{\mathcal{G}}(\overline{w}^{k},w^{i})
                                       \!+\!\alpha_{i}{D}_{\mathcal{G}}(\overline{w}^{k},w^{i})
                                     \!-\!\alpha_{i+1}\!{D}_{\mathcal{G}}(\overline{w}^{k},w^{i+1})\right]\nonumber\\
  &\le\sum_{i=1}^{k}(\alpha_{i+1}\!-\!\alpha_i)\widetilde{\eta}_{i}
      +\sum_{i=1}^{k}(\alpha_{i+1}\!-\!\alpha_i){D}_{\mathcal{G}}(\overline{w}^{k},w^{i})
  \le\sum_{i=1}^{k}|\alpha_{i+1}\!-\!\alpha_i|\big[\widetilde{\eta}_{i}+{D}_{\mathcal{G}}(\overline{w}^{k},w^{i})].
 \end{align}
 By the convexity of ${D}_{\mathcal{G}}(\cdot,w^{i})$ and the definition of $\overline{w}^{k}$,
 it follows that for $i=1,2,\ldots,k$,
 \begin{align*}
  &\ {D}_{\mathcal{G}}(\overline{w}^{k},w^{i})
  \le\sum_{j=1}^{k}\alpha_{j+1}D_{\mathcal{G}}(\widetilde{w}^{j+1},w^{i})
  \le 2\sum_{j=1}^{k}\alpha_{j+1}\big({D}_{\mathcal{G}}(\widetilde{w}^{j+1},w^{j})+{D}_{\mathcal{G}}(w^{j},w^{i})\big) \nonumber\\
  &\le \frac{2(D_{\mathcal{G}}(\widehat{w}^*,w^{1})+\eta_{1})}{1\!-\!\sigma_{\!\tau}}
        +2\sum_{j=1}^{k}\alpha_{j+1}{D}_{\mathcal{G}}(w^{j},w^{i})\nonumber\\
  &\le \frac{2(d^*\!+\!d_1\!+\!\eta_{1})}{1\!-\!\sigma_{\!\tau}}
       +4\sum_{j=1}^{k}\alpha_{j+1}\big({D}_{\mathcal{G}}(\widehat{w}^*,w^{i})\!+\!\mathcal{D}_{\mathcal{G}}(\widehat{w}^*,w^{j})\big)
  \le \frac{20\!-\!16\sigma_{\!\tau}}{1\!-\!\sigma_{\!\tau}}\big(d^*\!+\!d_1\!+\!\eta_{1}\big)
 \end{align*}
 where the third inequality is since
 $(1\!-\!\sigma_{\!\tau})D_{\mathcal{G}}(\widetilde{w}^{j+1},w^{j})\le D_{\mathcal{G}}(\widehat{w}^*,w^{1})+\eta_{1}$
 for $j\ge 1$ by \eqref{contractive-eq2}, and the last one is using
 ${D}_{\mathcal{G}}(\widehat{w}^*,w^{i})\le{D}_{\mathcal{G}}(\widehat{w}^*,w^{1})+\eta_{1}
 \le 2d^*\!+\!2d_1\!+\!\eta_{1}$ for $i\ge 1$.
 Substituting the last inequality into  (\ref{weight-complexity-eq9})
 and using $\widetilde{\eta}_{i}\le\eta_{i}\le {D}_{\mathcal{G}}(\widehat{w}^*,w^{1})+\eta_{1}$ for $i\ge 1$
 and $D_{\mathcal{G}}(\overline{w}^{k},w^{1})\le \frac{20-16\sigma_{\!\tau}}{1-\sigma_{\!\tau}}\big(d^*\!+\!d_1\!+\!\eta_{1}\big)$
 implied by the last inequality, we obtain that
 \begin{align*}%\label{weight-complexity-eq13}
  \overline{\epsilon}^{k}
  &\le\sum_{i=1}^{k}|\alpha_{i+1}\!-\!\alpha_{i}|
  \Big[\frac{20\!-\!16\sigma_{\!\tau}}{1\!-\!\sigma_{\!\tau}}\big(d^*\!+\!d_1\!+\!\eta_{1}\big)+2(d^*\!+\!d_1\!+\!\eta_1)\Big]
    \nonumber \\
  &\le 2(d^*\!+\!d_1\!+\!\eta_1)\frac{(11-9\sigma_{\!\tau})}{1-\sigma_{\!\tau}}\sum_{i=1}^{k}|\alpha_{i+1}\!-\!\alpha_{i}|
 \end{align*}
 Consequently, we obtain the desired bound for $\overline{\epsilon}^{k}$. The proof is completed
 \end{proof}

 By using Proposition \ref{ergodic-sequence}, we can establish the following main result of this paper.
%-----------------------------------------------------------------------------------Theorem
 \begin{theorem}\label{ADMM-ergodic}
  Let $\{(y^{k},z^{k},x^{k})\}$ be given by (\ref{y-sPADMM})-(\ref{x-sPADMM}) with $\tau\in(0,\frac{1+\sqrt{5}}{2})$.
  For each $k\ge 1$, let $\alpha_i \ge 0$ for $i=1,2,\ldots,k\!+\!1$
  be such that $\alpha_1=0$ and $\sum_{i=1}^{k}\alpha_{i+1}=1$ and define
  \begin{align*}
  \overline{y}^{k}\!=\!\sum_{i=1}^{k}\alpha_{i+1}{y}^{i+1},\ \
  \overline{z}^{k}=\sum_{i=1}^{k}\alpha_{i+1}z^{i+1},\ \
   \overline{x}^{k}\!=\!\sum_{i=1}^{k}\alpha_{i+1}\widetilde{x}^{i+1},\qquad\qquad\\
  \overline{\epsilon}_{y}^{k}\!=\!\sum_{i=1}^{k}\alpha_{i+1}\langle {y}^{i+1}\!-\!\overline{y}^{k},\delta_{y}^{i+1}\!-\!\mathcal{A}\widetilde{x}^{i+1}\rangle
  \ \ {\rm with}\  \delta_{y}^{i+1}=\mathcal{S}(y^{i}\!-\!y^{i+1}),\qquad\nonumber\\
  \overline{\epsilon}_{z}^{k}=\sum_{i=1}^{k}\alpha_{i+1}\langle {z}^{i+1}\!-\!\overline{z}^{k},\delta_{z}^{i+1}
  \!-\!\mathcal{B}\widetilde{x}^{i+1}\rangle
  \ \ {\rm with}\  \delta_{z}^{i+1}\!=(\mathcal{T}\!+\!\beta\mathcal{B}^*\mathcal{B})(z^{i}\!-\!z^{i+1}).
 \end{align*}
  Then, for each $k\ge 1$, we have $\overline{\epsilon}_{y}^{k}\ge 0$, $\overline{\epsilon}_{z}^{k}\ge 0$,
  and the following two inequalities
  \begin{subnumcases}{}\label{epskbound}
  \overline{\epsilon}_{y}^{k}+\overline{\epsilon}_{z}^{k}
  \le 2(d^*\!+\!d_1\!+\!\eta_1)\frac{(11-9\sigma_{\!\tau})}{1-\sigma_{\!\tau}}\sum_{i=1}^{k}|\alpha_{i+1}\!-\!\alpha_{i}|,\\
  {\rm dist}^2\big(0,\partial_{\overline{\epsilon}_{y}^{k}}\vartheta_{\!f}(\overline{y}^{k})\!+\!\mathcal{A}\overline{x}^{k}\big)
   +{\rm dist}^2\big(0,\partial_{\overline{\epsilon}_{z}^{k}} \varphi_{g}(\overline{z}^{k})\!+\!\mathcal{B}\overline{x}^{k}\big)
  \!+\!\|\mathcal{A}^*\overline{y}^{k}\!+\!\mathcal{B}^*\overline{z}^{k}\!-\!c\|^2\nonumber\\
  \label{KKT-bound}
  \le\Big(2\sqrt{\|\mathcal{G}\|(d^*\!+\!d_1\!+\!\eta_1)}+\|\mathcal{G}(\widehat{w}^{*})\|\Big)^2
     \Big({\textstyle\sum_{i=1}^{k}}\big|\alpha_{i+1}\!-\!\alpha_{i}\big|\!+\!\alpha_{k+1}\Big)^2,
  \end{subnumcases}
  where, for a given $\epsilon\ge 0$, $\partial_{\epsilon}\vartheta_{\!f}(y)$ denotes
  the $\epsilon$-subdifferential of the function $\vartheta_{\!f}$ at $y$.
 \end{theorem}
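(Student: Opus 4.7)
The plan is to apply Proposition \ref{ergodic-sequence} and then unpack its conclusions blockwise along the factors $\mathbb{Y},\mathbb{Z},\mathbb{X}$. From Lemma \ref{lemma1-sequence}, the inclusion $r^{i+1}\in T(\widetilde w^{i+1})$ reads componentwise as
\[
\delta_y^{i+1}-\mathcal{A}\widetilde x^{i+1}\in\partial\vartheta_{\!f}(y^{i+1}),\qquad \delta_z^{i+1}-\mathcal{B}\widetilde x^{i+1}\in\partial\varphi_g(z^{i+1}),
\]
while the $x$-block of $r^{i+1}$ is exactly the feasibility residual $c-\mathcal{A}^*y^{i+1}-\mathcal{B}^*z^{i+1}$. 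Since $\partial\vartheta_{\!f}$ and $\partial\varphi_g$ are maximally monotone, I would apply the weak transportation formula \cite[Theorem~2.3]{BSS99} separately to each operator to obtain $\overline\epsilon_y^k,\overline\epsilon_z^k\ge 0$ together with
\[
\textstyle\sum_i\alpha_{i+1}(\delta_y^{i+1}-\mathcal{A}\widetilde x^{i+1})\in \partial_{\overline\epsilon_y^k}\vartheta_{\!f}(\overline y^k),\qquad \sum_i\alpha_{i+1}(\delta_z^{i+1}-\mathcal{B}\widetilde x^{i+1})\in \partial_{\overline\epsilon_z^k}\varphi_g(\overline z^k).
\]
Using $\sum_i\alpha_{i+1}\mathcal{A}\widetilde x^{i+1}=\mathcal{A}\overline x^k$ and its $\mathcal{B}$-analogue, these rearrange into the enlargement memberships needed to control the two distance terms in \eqref{KKT-bound}.

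For \eqref{KKT-bound}, the aggregated residual decomposes blockwise as
\[
\|\overline r^k\|^2=\Bigl\|\textstyle\sum_i\alpha_{i+1}\delta_y^{i+1}\Bigr\|^2+\Bigl\|\sum_i\alpha_{i+1}\delta_z^{i+1}\Bigr\|^2+\|\mathcal{A}^*\overline y^k+\mathcal{B}^*\overline z^k-c\|^2,
\]
where the last term uses $\sum_i\alpha_{i+1}=1$. By the memberships above, the two $\operatorname{dist}^2$ quantities in \eqref{KKT-bound} are bounded by the first two summands and the feasibility term equals the third exactly, so inserting the $\|\overline r^k\|$ estimate from Proposition \ref{ergodic-sequence} delivers \eqref{KKT-bound} directly.

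The main effort is \eqref{epskbound}, for which I would prove the algebraic identity $\overline\epsilon_y^k+\overline\epsilon_z^k=\overline\epsilon^k$, after which the bound is inherited from Proposition \ref{ergodic-sequence}. Since $\sum_i\alpha_{i+1}(\widetilde w^{i+1}-\overline w^k)=0$, the $-\overline r^k$ piece in the definition of $\overline\epsilon^k$ vanishes, so $\overline\epsilon^k=\sum_i\alpha_{i+1}\langle\widetilde w^{i+1}-\overline w^k,r^{i+1}\rangle$, which I split along the three blocks into a $y$-inner product $\sum_i\alpha_{i+1}\langle y^{i+1}-\overline y^k,\delta_y^{i+1}\rangle$, a $z$-inner product $\sum_i\alpha_{i+1}\langle z^{i+1}-\overline z^k,\delta_z^{i+1}\rangle$, and an $x$-inner product $\sum_i\alpha_{i+1}\langle\widetilde x^{i+1}-\overline x^k,c-\mathcal{A}^*y^{i+1}-\mathcal{B}^*z^{i+1}\rangle$. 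On the other hand, expanding the definitions of $\overline\epsilon_y^k,\overline\epsilon_z^k$ produces the same $y$- and $z$-inner products plus a cross contribution $-\sum_i\alpha_{i+1}\langle\mathcal{A}^*(y^{i+1}-\overline y^k)+\mathcal{B}^*(z^{i+1}-\overline z^k),\widetilde x^{i+1}\rangle$ coming from the $-\mathcal{A}\widetilde x^{i+1}$ and $-\mathcal{B}\widetilde x^{i+1}$ terms. The crux—and main obstacle—is a short mean-zero manipulation, using $\sum_i\alpha_{i+1}y^{i+1}=\overline y^k$, $\sum_i\alpha_{i+1}z^{i+1}=\overline z^k$ and $\sum_i\alpha_{i+1}\widetilde x^{i+1}=\overline x^k$, showing that this cross contribution is precisely the $x$-inner product above. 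Once the identity $\overline\epsilon_y^k+\overline\epsilon_z^k=\overline\epsilon^k$ is established, the bound on $\overline\epsilon^k$ in Proposition \ref{ergodic-sequence} yields \eqref{epskbound} and completes the proof.
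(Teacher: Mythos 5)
Your proposal is correct and follows essentially the same route as the paper: invoke Proposition \ref{ergodic-sequence}, apply the weak transportation formula of \cite[Theorem 2.3]{BSS99} blockwise to $\partial\vartheta_{\!f}$ and $\partial\varphi_g$ to get the $\varepsilon$-subgradient memberships and nonnegativity, bound the three KKT residual terms by $\|\overline r^k\|^2$, and reduce \eqref{epskbound} to the identity $\overline\epsilon_y^k+\overline\epsilon_z^k=\overline\epsilon^k$ via the mean-zero cancellation $\sum_i\alpha_{i+1}(\widetilde w^{i+1}-\overline w^k)=0$ and $\sum_i\alpha_{i+1}=1$. The cross-term manipulation you flag as the crux is exactly the computation carried out in the paper's proof, and it goes through as you describe.
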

 \begin{proof}
  From \eqref{inexact-a}, it follows that $r^{i+1}=\mathcal{G}(w^{i}\!-\!w^{i+1})\in T(\widetilde{w}^{i+1})$
  for each $i\ge 1$. Write
  \[
   \overline{r}^{k}=(\overline{r}_1^{k},\overline{r}_2^{k},\overline{r}_3^{k})
   \!:=\!{\textstyle\sum_{i=1}^{k}}\alpha_{i+1}r^{i+1}\ \ {\rm and}\ \  \overline{\epsilon}^k:={\textstyle\sum_{i=1}^{k}}\alpha_{i+1}\langle \widetilde{w}^{i+1}\!-\!\overline{w}^{k},r^{i+1}\rangle.
  \]
  Then, by using Proposition \ref{ergodic-sequence}, we obtain that
  \(
    \overline{r}^{k}\in T^{[\overline{\epsilon}^{k}]}(\overline{y}^{k},\overline{z}^{k},\overline{x}^{k})
  \)
  with
  \begin{align}\label{rkbound1}
   \|\overline{r}^{k}\|
  \le\big(2\sqrt{\|\mathcal{G}\|}\sqrt{d^*\!+\!d_1\!+\!\eta_{1}}+\|\mathcal{G}(\widehat{w}^{*})\|\big)
     \left(\textstyle\sum_{i=1}^{k}\big|\alpha_{i}\!-\!\alpha_{i+1}\big|\!+\!\alpha_{k+1}\right),\\
   \label{epskbound1}
  \overline{\epsilon}^{k}
   \le 2(d^*\!+\!d_1\!+\!\eta_1)\frac{(11-9\sigma_{\!\tau})}{1-\sigma_{\!\tau}}\sum_{i=1}^{k}|\alpha_{i+1}\!-\!\alpha_{i}|.
   \qquad\qquad\qquad
  \end{align}
  Notice that $r_1^{i+1}\!-\!\mathcal{A}\widetilde{x}^{i+1}\in\partial\vartheta_{\!f}(y^{i+1})$ and
  $r_2^{i+1}\!-\!\mathcal{B}\widetilde{x}^{i+1}\in\partial\varphi_{g}(z^{i+1})$ for each $i\ge 1$
  by the definition of $\mathcal{G}$ and $r^{i+1}\in T(\widetilde{w}^{i+1})$.
  From \cite[Theorem 2.3]{BSS99}, it follows that
  \(
    \overline{r}^{k}_{1}-\mathcal{A}\overline{x}^{k}\in
  \partial_{\overline{\epsilon}_{y}^{k}}\vartheta_{\!f}(\overline{y}^{k})
  \)
  and
  \(
    \overline{r}^{k}_{2}-\mathcal{B}\overline{x}^{k}
  \in\partial_{\overline{\epsilon}_{z}^{k}}\varphi_{g}(\overline{z}^{k})
  \)
  with $\overline{\epsilon}_{y}^{k},\overline{\epsilon}_{z}^{k}\ge 0$.
  Notice that $\overline{r}_3^k=\mathcal{A}^*\overline{y}^{k}\!+\!\mathcal{B}^*\overline{z}^{k}\!-\!c$
  since $r^{i+1}_{3}=(\tau\beta)^{-1}(x^{i}\!-\!x^{i+1})
  =c\!-\!\mathcal{A}^*y^{i+1}\!-\!\mathcal{B}^*z^{i+1}$. Therefore, it holds that
  \[
     {\rm dist}^2\big(0,\partial_{\overline{\epsilon}_{y}^{k}}\vartheta_{\!f}(\overline{y}^{k})\!+\!\mathcal{A}\overline{x}^{k}\big)
   +{\rm dist}^2\big(0,\partial_{\overline{\epsilon}_{z}^{k}}\varphi_{g}(\overline{z}^{k})\!+\!\mathcal{B}\overline{x}^{k}\big)
  \!+\!\|\mathcal{A}^*\overline{y}^{k}\!+\!\mathcal{B}^*\overline{z}^{k}\!-\!c\|^2
  \le\|\overline{r}^k\|^2.
  \]
  Together with \eqref{rkbound1}, we obtain \eqref{KKT-bound}.
  By the definitions of $\overline{\epsilon}_{y}^{k}$ and $\overline{\epsilon}_{z}^{k}$,
  we have that
  \begin{align*}
    &\overline{\epsilon}_{y}^{k}+\overline{\epsilon}_{z}^{k}= \sum_{i=1}^{k}\alpha_{i+1}
     \big(\langle {y}^{i+1}\!-\!\overline{y}^{k},\delta_{y}^{i+1}-\mathcal{A}\widetilde{x}^{i+1}\rangle
      +\langle{z}^{i+1}\!-\!\overline{z}^{k},\delta_{z}^{i+1}-\mathcal{B}\widetilde{x}^{i+1}\rangle\big)\nonumber\\
   &\!=\sum_{i=1}^{k}\alpha_{i+1}
     \big(\langle{y}^{i+1}\!-\!\overline{y}^{k},r^{i+1}_{1}\rangle\!+\!\langle{z}^{i+1}\!-\!\overline{z}^{k},r^{i+1}_{2}\rangle
     \!-\!\langle \mathcal{A}^*({y}^{i+1}\!-\!\overline{y}^{k})\!+\!\mathcal{B}^*({z}^{i+1}\!-\!\overline{z}^{k}),
            \widetilde{x}^{i+1}\rangle\big)\nonumber\\
   &\!=\sum_{i=1}^{k}\alpha_{i+1}
     \big(\langle \widetilde{w}^{i+1}\!-\!\overline{w}^{k},r^{i+1}\rangle
 \!-\!\langle \widetilde{x}^{i+1}\!-\!\overline{x}^{k},r^{i+1}_{3}\rangle \!-\!\langle \mathcal{A}^*({y}^{i+1}\!-\!\overline{y}^{k})\!+\!\mathcal{B}^*({z}^{i+1}\!-\!\overline{z}^{k}),\widetilde{x}^{i+1}\rangle\big).
 \end{align*}
  Recall that $r^{i+1}_{3}=(\tau\beta)^{-1}(x^{i}\!-\!x^{i+1})
  =c\!-\!\mathcal{A}^*y^{i+1}\!-\!\mathcal{B}^*z^{i+1}$. Then, we obtain that
 \begin{align*}
  \overline{\epsilon}_{y}^{k}+ \overline{\epsilon}_{z}^{k}
  &=\overline{\epsilon}^{k} -\sum_{i=1}^{k}\alpha_{i+1}
    \left(\langle \widetilde{x}^{i+1}\!-\!\overline{x}^{k},r^{i+1}_{3}\rangle +\langle \mathcal{A}^*{y}^{i+1}\!+\!\mathcal{B}^*{z}^{i+1}\!-\!\mathcal{A}^*\overline{y}^{k}\!-\!\mathcal{B}^*\overline{z}^{k},
    \widetilde{x}^{i+1}\rangle\right)\nonumber\\
  &=\overline{\epsilon}^{k} -\sum_{i=1}^{k}\alpha_{i+1}
   \left(\big\langle \overline{x}^{k},\mathcal{A}^*{y}^{i+1}\!+\!\mathcal{B}^*{z}^{i+1}\!-\!c\big\rangle
    +\big\langle c\!-\!\mathcal{A}^*\overline{y}^{k}\!-\!\mathcal{B}^*\overline{z}^{k},\widetilde{x}^{i+1}\big\rangle\right)\nonumber\\
  &=\overline{\epsilon}^{k}\!-\!\big\langle \overline{x}^{k},\mathcal{A}^*\overline{y}^{k}\!+\!\mathcal{B}^*\overline{z}^{k}\!-\!c\big\rangle
     \!-\!\big\langle c\!-\!\mathcal{A}^*\overline{y}^{k}\!-\!\mathcal{B}^*\overline{z}^{k},\overline{x}^{k}\big\rangle
     =\overline{\epsilon}^{k}.
 \end{align*}
  Along with the nonnegativity of $\overline{\epsilon}_{y}^{k}$ and $\overline{\epsilon}_{z}^{k}$
  and \eqref{epskbound1}, we obtain inequality \eqref{epskbound}.
 \end{proof}
%--------------------------------------------------------------------------------------
  \begin{remark}
  {\bf (a)} Theorem \ref{ADMM-ergodic} provides a weighted iteration complexity on
  the KKT residuals for the sPADMM with $\tau\in(0,\frac{1+\sqrt{5}}{2})$.
  Among others, the iteration complexity bound depends on the choice of $\alpha_i$.
  When $\alpha_i\equiv\frac{1}{k}$ or $\alpha_i=\frac{i}{\sum_{i=1}^k(i+1)}$
  for $i=2,\ldots,k\!+\!1$, we have $\overline{\epsilon}_{y}^{k}+\overline{\epsilon}_{z}^{k}\le \frac{M}{k}$
  with $M=4(d^*\!+\!d_1\!+\!\eta_1)\frac{11-9\sigma_{\!\tau}}{1-\sigma_{\!\tau}}$
  and
 \[
 {\rm dist}^2\big(0,\partial_{\overline{\epsilon}_{y}^{k}}\vartheta_{\!f}(\overline{y}^{k})\!+\!\mathcal{A}\overline{x}^{k}\big)
   +{\rm dist}^2\big(0,\partial_{\overline{\epsilon}_{z}^{k}} \varphi_{g}(\overline{z}^{k})\!+\!\mathcal{B}\overline{x}^{k}\big)
  \!+\!\|\mathcal{A}^*\overline{y}^{k}\!+\!\mathcal{B}^*\overline{z}^{k}\!-\!c\|^2
  \le\frac{C}{k^2}
  \]
  with $C=64\big(\sqrt{\|\mathcal{G}\|(d^*\!+\!d_1\!+\!\eta_1)}+\frac{1}{2}\|\mathcal{G}(\widehat{w}^{*})\|\big)^2$.
  The constants $M$ and $C$ depend on the spectral norm of $\mathcal{G}$ (and then
  those of the proximal operators $\mathcal{S}$ and $\mathcal{T}$),
  the distance from the initial point $w^0$ to $T^{-1}(0)$ (i.e., the value of $d^*$),
  the distance from $w^0$ to $w^1$ (i.e., the value of $d_1$)
  and the value of $\eta_1$. Clearly, if the spectral norms of $\mathcal{S}$ and $\mathcal{T}$
  are smaller, then the complexity bound is better; and if $d^*$, $d_1$ and $\eta_1$ are smaller,
  the complexity bound is better. As will be shown in the next section,
  the values of $d_1$ and $\eta_1$ can be controlled by the distance
  from $w^0$ to $T^{-1}(0)$. Thus, the complexity bound mainly depends on
  the spectral norms of $\mathcal{S}$ and $\mathcal{T}$ and the value of $d^*$.

  \medskip
  \noindent
  {\bf (b)} Theorem \ref{ADMM-ergodic} provides an $\mathcal{O}(1/k)$ weighted
  iteration complexity on the KKT residuals yielded by the sGS based semi-proximal ADMM
  proposed in \cite{LST161} for the convex composite quadratic programming involving
  two nonsmooth and multiple linear/quadratic convex functions, since this type of ADMM
  was shown to be equivalent to a semi-proximal ADMM for \eqref{prob}.
  It also provides an $\mathcal{O}(1/k)$ weighted iteration
  complexity on the KKT residuals for the classic ADMM and the positive definite
  proximal ADMM with a large step-size.

  \medskip
  \noindent
  {\bf(c)} The iteration complexity of Theorem \ref{ADMM-ergodic} was derived without
  requiring $\mathcal{S}$ and $\mathcal{T}$ to be such that
  $\mathcal{S}\!+\!\Sigma_{\vartheta_{\!f}}\!+\!\beta\mathcal{A}\mathcal{A}^*$
  and $\mathcal{T}\!+\!\Sigma_{\varphi_{g}}\!+\!\beta\mathcal{B}\mathcal{B}^*$ are positive
  definite as required in \cite{FPST13,HSZ15} for the convergence and the linear convergence rate
  of the semi-proximal ADMM.
 \end{remark}
%-----------------------------------------------------------------------------------------------
 \section{Upper estimations of $d_1$ and $\eta_1$}\label{sec3}

  In this section, we present an upper estimation for $d_1$ and $\eta_1$ involving
  in the iteration complexity bounds in \eqref{epskbound} and \eqref{KKT-bound}.
  For this purpose, we let $\mathcal{H}_1,\mathcal{H}_2,\mathcal{H}_3:\mathbb{H}\to\mathbb{H}$
  be the self-adjoint block diagonal positive semidefinite linear operators defined by
  \begin{align*}
   \mathcal{H}_1:={\rm Diag}\left[\begin{matrix}
                    2(\mathcal{S}\!+\Sigma_{\vartheta_{\!f}}\!+\!4\beta\mathcal{A}\mathcal{A}^*),\
                    7\beta\mathcal{B}\mathcal{B}^*,\ 7\beta^{-1}
    \end{matrix}\right],\qquad\qquad\\
    \mathcal{H}_2:={\rm Diag}\left[\begin{matrix}
                    28(\mathcal{S}\!+\Sigma_{\vartheta_{\!f}}\!+\!5\beta\mathcal{A}\mathcal{A}^*),\
                    2(\mathcal{T}\!+\Sigma_{\varphi_{g}}\!+\!53\beta\mathcal{B}\mathcal{B}^*),\ 105\beta^{-1}
    \end{matrix}\right],\quad\\
    \mathcal{H}_3:=4\tau{\rm Diag}\left[\begin{matrix}
                   30(\mathcal{S}\!+\Sigma_{\vartheta_{\!f}}\!+\!5\beta\mathcal{A}\mathcal{A}^*),\
                    2(\mathcal{T}\!+\Sigma_{\varphi_{g}}\!+\!57\beta\mathcal{B}\mathcal{B}^*),\ 112\beta^{-1}
    \end{matrix}\right].
  \end{align*}
  The following lemma implies a relation for $\|w^0-w^1\|_{\mathcal{M}}^2$ and
  $\sum_{i=1}^3{\rm dist}_{\mathcal{H}_i}^2(w^0,T^{-1}(0))$.
%------------------------------------------------------------------------------------------------
  \begin{lemma}\label{distance-x1x0}
   The first iteration point $(y^1,z^1,x^1)$ in equation \eqref{y-sPADMM}-\eqref{x-sPADMM} satisfies
   \begin{subnumcases}{}\label{ypart}
   \|y^1\!-\!y^0\|^2_{\mathcal{S}\!+\Sigma_{\vartheta_{\!f}}\!+\!\beta\mathcal{A}\mathcal{A}^*}
   \le {\rm dist}_{\mathcal{H}_1}^2(w^0,T^{-1}(0)),\\
   \label{zpart}
   \|z^1\!-\!z^0\|^2_{\mathcal{T}\!+\!\beta\mathcal{B}\mathcal{B}^*+\Sigma_{\varphi_{g}}}
   \le {\rm dist}_{\mathcal{H}_2}^2(w^0,T^{-1}(0)),\\
   \label{xpart}
   (\tau\beta)^{-1}\|x^1\!-\!x^0\|^2\le {\rm dist}_{\mathcal{H}_3}^2(w^0,T^{-1}(0)).
   \end{subnumcases}
  \end{lemma}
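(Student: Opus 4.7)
The plan is to treat \eqref{ypart}, \eqref{zpart} and \eqref{xpart} in that order (since the $z^1$ update reads the just-computed $y^1$, and the $x^1$ update reads both). In each case I would start from the first-order optimality condition of the relevant subproblem at $k=0$, subtract the corresponding inclusion at an arbitrary KKT point $w^*=(y^*,z^*,x^*)\in T^{-1}(0)$, exploit the strong-monotonicity inequalities \eqref{fgmonotone}, and take the infimum over $w^*$ at the very end.

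Concretely, for \eqref{ypart} I would combine \eqref{opt-x} with $k=0$ and $-\mathcal{A}x^*\in\partial\vartheta_{\!f}(y^*)$. After using \eqref{fgmonotone} and the feasibility $\mathcal{A}^*y^*+\mathcal{B}^*z^*=c$, the pairing reduces to
\[
\|y^1\!-\!y^*\|^2_{\Sigma_{\vartheta_{\!f}}}+\beta\|\mathcal{A}^*(y^1\!-\!y^*)\|^2+\langle\mathcal{S}(y^1\!-\!y^0),y^1\!-\!y^*\rangle\le -\langle\mathcal{A}^*(y^1\!-\!y^*),\,(x^0\!-\!x^*)+\beta\mathcal{B}^*(z^0\!-\!z^*)\rangle.
\]
I would then (i) expand the $\mathcal{S}$-cross term via $y^1-y^*=(y^1-y^0)+(y^0-y^*)$ to liberate $\|y^1-y^0\|^2_{\mathcal{S}}$ on the left, and (ii) attack the right-hand side with Young's inequality at weights tuned so that a fraction of $\beta\|\mathcal{A}^*(y^1-y^*)\|^2$ is absorbed on the left. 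This produces a bound on $\|y^1-y^*\|^2_{\mathcal{S}+\Sigma_{\vartheta_{\!f}}+\beta\mathcal{A}\mathcal{A}^*}$ (and, as a by-product, on $\|y^1-y^*\|^2_{\mathcal{A}\mathcal{A}^*}$ alone) in terms of $\|y^0-y^*\|^2_{\mathcal{S}}$, $\beta^{-1}\|x^0-x^*\|^2$ and $\beta\|\mathcal{B}^*(z^0-z^*)\|^2$. Applying the triangle-type inequality $\|a-b\|^2_M\le 2\|a-c\|^2_M+2\|b-c\|^2_M$ with $c=y^*$ and $M=\mathcal{S}+\Sigma_{\vartheta_{\!f}}+\beta\mathcal{A}\mathcal{A}^*$, and minimising over $w^*$, yields \eqref{ypart}.

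For \eqref{zpart} I replicate the template starting from \eqref{opt-y} at $k=0$ and $-\mathcal{B}x^*\in\partial\varphi_{g}(z^*)$. The novelty is that after expanding $\mathcal{A}^*y^1+\mathcal{B}^*z^1-c=\mathcal{A}^*(y^1-y^*)+\mathcal{B}^*(z^1-z^*)$ the residual couples back to $y^1$, so I would feed the by-product bound on $\|y^1-y^*\|^2_{\mathcal{A}\mathcal{A}^*}$ from the previous step into the corresponding Young split, and then apply the triangle-type doubling with $M=\mathcal{T}+\Sigma_{\varphi_{g}}+\beta\mathcal{B}\mathcal{B}^*$. This back-substitution is what forces $\mathcal{H}_2$ to carry strictly larger coefficients than $\mathcal{H}_1$. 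Finally \eqref{xpart} follows almost immediately from \eqref{x-sPADMM} at $k=0$: since $x^1-x^0=\tau\beta(\mathcal{A}^*(y^1-y^*)+\mathcal{B}^*(z^1-z^*))$, one gets $(\tau\beta)^{-1}\|x^1-x^0\|^2\le 2\tau\beta\|y^1-y^*\|^2_{\mathcal{A}\mathcal{A}^*}+2\tau\beta\|z^1-z^*\|^2_{\mathcal{B}\mathcal{B}^*}$, after which inserting the bounds produced in the first two steps and again taking the infimum produces \eqref{xpart}; the overall factor $4\tau$ in $\mathcal{H}_3$ is traceable to this last reduction.

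The main obstacle will be the precise bookkeeping of Young-inequality weights so that every numerical coefficient appearing in $\mathcal{H}_1,\mathcal{H}_2,\mathcal{H}_3$ is matched by the cumulative losses from the Young splits and the triangle-type doublings, executed in the order $y\to z\to x$. The underlying inequalities are routine; the delicacy lies purely in the calibration of these constants and in remembering that the coupling from $y^1$ into the $z$-step, and from both $y^1$ and $z^1$ into the $x$-step, inflates the operators $\mathcal{H}_2$ and $\mathcal{H}_3$ in a quantitatively predictable way.
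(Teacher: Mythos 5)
Your skeleton is the paper's: take the optimality conditions of \eqref{y-sPADMM} and \eqref{z-sPADMM} at $k=0$, pair them against $-\mathcal{A}x^*\in\partial\vartheta_{\!f}(y^*)$ and $-\mathcal{B}x^*\in\partial\varphi_{g}(z^*)$ through \eqref{fgmonotone}, split the mixed terms by Young/Cauchy--Schwarz, propagate the $y$-estimate into the $z$-estimate and both into the $x$-estimate, and take the infimum over $w^*\in T^{-1}(0)$ at the end. The intermediate inequality you display for the $y$-step is correct.

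Where you deviate from the paper, and where the risk sits, is the choice of pivot variable. The paper never bounds $\|y^1-y^*\|^2_{\mathcal{S}+\Sigma_{\vartheta_{\!f}}+\beta\mathcal{A}\mathcal{A}^*}$: it decomposes the residual as $\mathcal{A}^*y^1+\mathcal{B}^*z^0-c=\mathcal{A}^*(y^1-y^0)+\mathcal{A}^*(y^0-y^*)+\mathcal{B}^*(z^0-z^*)$ so that $\|y^1-y^0\|^2_{\mathcal{S}+\beta\mathcal{A}\mathcal{A}^*+\frac12\Sigma_{\vartheta_{\!f}}}$ sits on the left from the outset; two of the three Young splits each return $\frac14\|y^1-y^0\|^2_{\mathcal{S}+\beta\mathcal{A}\mathcal{A}^*}$ to be absorbed, and doubling the surviving $\frac12$ yields exactly the blocks $2(\mathcal{S}+\Sigma_{\vartheta_{\!f}}+4\beta\mathcal{A}\mathcal{A}^*)$, $7\beta\mathcal{B}\mathcal{B}^*$, $7\beta^{-1}$ of $\mathcal{H}_1$. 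Your route --- bound $\|y^1-y^*\|^2_M$ first and then invoke $\|y^1-y^0\|^2_M\le 2\|y^1-y^*\|^2_M+2\|y^0-y^*\|^2_M$ --- injects an additional $2\|y^0-y^*\|^2_{\mathcal{S}+\Sigma_{\vartheta_{\!f}}+\beta\mathcal{A}\mathcal{A}^*}$ on top of whatever $\|y^0-y^*\|$-terms the Young splits already cost, and the $\mathcal{S}+\Sigma_{\vartheta_{\!f}}$ block of $\mathcal{H}_1$ has coefficient exactly $2$ with no slack; it is therefore doubtful that your constants close to the stated $\mathcal{H}_1$ (and hence to $\mathcal{H}_2,\mathcal{H}_3$, which inherit it). Since the entire content of the lemma is those specific operators, the bookkeeping you have explicitly deferred is the lemma. (Also, your two-term split in the $x$-step produces a factor $2\tau\beta$, not the $4\tau$ appearing in $\mathcal{H}_3$; the paper gets the $4$ from a four-term split through $y^0$ and $z^0$, which is precisely what lets it reuse the already-proved bounds on $\|y^1-y^0\|$ and $\|z^1-z^0\|$.) The repair is to keep $y^1-y^0$ (resp.\ $z^1-z^0$) on the left throughout and route every comparison with $w^*$ through $y^0$ (resp.\ $z^0$); for the $\|y^1-y^*\|^2_{\beta\mathcal{A}\mathcal{A}^*}$ term arising in the $z$-step, apply the doubling in the opposite direction, $\|y^1-y^*\|^2\le 2\|y^1-y^0\|^2+2\|y^0-y^*\|^2$, and substitute the already-established $y$-bound.
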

  \begin{proof}
   The optimal condition of the minimization problem \eqref{y-sPADMM} with $k=0$ at $y^1$ is
   \begin{equation*}
   0\in \partial\vartheta_{\!f}(y^1)+\mathcal{A}x^0+\beta\mathcal{A}(\mathcal{A}^*y^1+\mathcal{B}^*z^0-c)+\mathcal{S}(y^1-y^0).
   \end{equation*}
   Let $w^*$ be an arbitrary point from $T^{-1}(0)$. Since $0\in T(w^*)$, we have
   \(
     -\mathcal{A}x^*\in\partial\vartheta_{\!f}(y^*).
   \)
   Using the first inequality in \eqref{fgmonotone} and
   $\|y^0\!-\!y^1\|_{\Sigma_{\vartheta_{\!f}}}^2\!\le\!2\|y^*\!-\!y^1\|_{\Sigma_{\vartheta_{\!f}}}^2
   \!+\!2\|y^*\!-\!y^0\|_{\Sigma_{\vartheta_{\!f}}}^2$
   yields that
  \begin{align*}
   &\langle \mathcal{A}(x^0\!-\!x^*),y^*\!-\!y^1\rangle
   \!+\!\beta \langle \mathcal{A}(\mathcal{A}^*y^1\!+\!\mathcal{B}^*z^0\!-\!c),y^*\!-\!y^1\rangle
   \!+\!\langle \mathcal{S}(y^1\!-\!y^0),y^*\!-\!y^1\rangle \nonumber\\
   &\ge \|y^*\!-\!y^1\|_{\Sigma_{\vartheta_{\!f}}}^2\ge \frac{1}{2}\|y^0\!-\!y^1\|_{\Sigma_{\vartheta_{\!f}}}^2-\|y^*\!-\!y^0\|_{\Sigma_{\vartheta_{\!f}}}^2.
  \end{align*}
   Notice that $\mathcal{A}^*y^1\!+\!\mathcal{B}^*z^0\!-\!c
   = \mathcal{A}^*(y^1\!-\!y^0)+\mathcal{A}^*(y^0\!-\!y^*)\!+\!\mathcal{B}^*(z^0\!-\!z^*)$. We have that
   \begin{align}\label{y-upper-temp1}
   \|y^1\!-\!y^0\|^2_{\mathcal{S}\!+\!\beta\mathcal{A}^*\mathcal{A}+\frac{1}{2}\Sigma_{\vartheta_{\!f}}}
   &\le \langle \beta\mathcal{A}^*(y^0\!-\!y^*)+\beta\mathcal{B}^*(z^0\!-\!z^*)+x^0\!-\!x^*,\mathcal{A}^*(y^*\!-\!y^0)\rangle\nonumber\\
   &\quad +\langle \beta\mathcal{A}^*(y^0\!-\!y^*)+\beta\mathcal{B}^*(z^0\!-\!z^*)+x^0\!-\!x^*,\mathcal{A}^*(y^0\!-\!y^1)\rangle\nonumber\\
   &\quad +\langle(\mathcal{S}\!+\!\beta\mathcal{A}\mathcal{A}^*)(y^1\!-\!y^0),y^*\!-\!y^0\rangle
         +\|y^*\!-\!y^0\|_{\Sigma_{\vartheta_{\!f}}}^2.
  \end{align}
  For the first term on the right hand side of \eqref{y-upper-temp1},
  by using $\langle\mathcal{B}^*(z^0\!-\!z^*),\mathcal{A}^*(y^*\!-\!y^0)\rangle
   \leq \frac{1}{2}\|z^0\!-\!z^*\|^2_{\mathcal{B}\mathcal{B}^*}
     +\frac{1}{2}\|y^0\!-\!y^*\|^2_{\mathcal{A}\mathcal{A}^*}$ and
  $\langle x^0\!-\!x^*,\mathcal{A}^*(y^*\!-\!y^0)\rangle
   \leq \frac{1}{2\beta}\|x^0\!-\!x^*\|^2 +\frac{\beta}{2}\|y^0\!-\!y^*\|^2_{\mathcal{A}\mathcal{A}^*}$,
  \begin{equation}\label{y-term1}
    \langle \beta\mathcal{A}^*(y^0\!-\!y^*)+\beta\mathcal{B}^*(z^0\!-\!z^*)+x^0\!-\!x^*,\mathcal{A}^*(y^*\!-\!y^0)\rangle
    \le\frac{1}{2}\|z^0\!-\!z^*\|^2_{\beta\mathcal{B}^*\mathcal{B}}\!+\!\frac{1}{2\beta}\|x^0\!-\!x^*\|^2.
  \end{equation}
  For the second term, using the Cauchy-Schwartz inequality yields that
  \begin{align}\label{y-term2}
  &\langle \beta\mathcal{A}^*(y^0\!-\!y^*)+\beta\mathcal{B}^*(z^0\!-\!z^*)+x^0\!-\!x^*,\mathcal{A}^*(y^0\!-\!y^1)\rangle\nonumber\\
  &\leq \frac{\beta}{4}\|y^0\!-\!y^1\|^2_{\mathcal{A}^*\mathcal{A}}
      +\frac{1}{\beta}\|\beta\mathcal{A}^*(y^0\!-\!y^*)+\beta\mathcal{B}^*(z^0\!-\!z^*)+x^0\!-\!x^*\|^2\nonumber\\
  &\leq \frac{1}{4}\|y^0\!-\!y^1\|^2_{\mathcal{S}+\beta\mathcal{A}\mathcal{A}^*}
      +3\|y^0\!-\!y^*\|^2_{\beta\mathcal{A}\mathcal{A}^*}+3\|z^0\!-\!z^*\|^2_{\beta\mathcal{B}\mathcal{B}^*}
      +\frac{3}{\beta}\|x^0\!-\!x^*\|^2.
  \end{align}
  For the third term on the right hand side of \eqref{y-upper-temp1},
  using the same technique yields that
  \begin{align}\label{y-term3}
  \langle(\mathcal{S}\!+\!\beta\mathcal{A}\mathcal{A}^*)(y^1\!-\!y^0),y^*\!-\!y^0\rangle
  \leq \frac{1}{4}\|y^1\!-\!y^0\|_{\mathcal{S}\!+\!\beta\mathcal{A}\mathcal{A}^*}^2
       +\|y^*\!-\!y^0\|_{\mathcal{S}\!+\!\beta\mathcal{A}\mathcal{A}^*}^2,
  \end{align}
  Substituting inequalities \eqref{y-term1}-\eqref{y-term3} to equation \eqref{y-upper-temp1},
  we obtain that
  \begin{equation}\label{ypart1}
   \|y^1\!-\!{y}^0\|^2_{\mathcal{S}+\Sigma_{\vartheta_{\!f}}+\beta\mathcal{A}\mathcal{A}^*}
    \le 2\|y^0\!-\!y^*\|_{\mathcal{S}\!+\Sigma_{\vartheta_{\!f}}\!+\!4\beta\mathcal{A}\mathcal{A}^*}^2
         +7\|z^0\!-\!z^*\|^2_{\beta\mathcal{B}\mathcal{B}^*}+7\beta^{-1}\|x^0\!-\!x^*\|^2.
  \end{equation}
  This, by the definition of $\mathcal{H}_1$, is equivalent to
  $\|y^1\!-\!y^0\|^2_{\mathcal{S}\!+\Sigma_{\vartheta_{\!f}}\!+\!\beta\mathcal{A}\mathcal{A}^*}
   \le \|w^0\!-\!w^*\|_{\mathcal{H}_1}^2$.

  \medskip

  Using the same arguments as above for the problem \eqref{z-sPADMM} with $k=0$, we obtain that
  \begin{align*}\label{z-upper-temp2}
   \|z^1\!-\!z^0\|^2_{\mathcal{T}\!+\!\beta\mathcal{B}\mathcal{B}^*+\Sigma_{\varphi_{g}}}
   &\le 2\|z^0\!-\!z^*\|_{\mathcal{T}\!+\Sigma_{\varphi_{g}}\!+\!4\beta\mathcal{B}\mathcal{B}^*}^2
         +7\|y^1\!-\!y^*\|^2_{\beta\mathcal{A}\mathcal{A}^*}+\frac{7}{\beta}\|x^0\!-\!x^*\|^2.
  \end{align*}
  Notice that $7\|y^1\!-\!y^*\|^2_{\beta\mathcal{A}\mathcal{A}^*}
  \!\le\! 14\|y^1\!-\!y^0\|^2_{\beta\mathcal{A}\mathcal{A}^*}+14\|y^0\!-\!y^*\|^2_{\beta\mathcal{A}\mathcal{A}^*}$.
  Using \eqref{ypart1} yields that
  \[
   7\|y^1\!-\!y^*\|^2_{\beta\mathcal{A}\mathcal{A}^*}
   \le 28\|y^0\!-\!y^*\|_{\mathcal{S}\!+\Sigma_{\vartheta_{\!f}}\!+\!4\beta\mathcal{A}\mathcal{A}^*}^2
         +98\|z^0\!-\!z^*\|^2_{\beta\mathcal{B}\mathcal{B}^*}+\frac{98}{\beta}\|x^0\!-\!x^*\|^2
         +14\|y^0\!-\!y^*\|^2_{\beta\mathcal{A}\mathcal{A}^*}.
  \]
  The last two inequalities implies
  $\|z^1\!-\!z^0\|^2_{\mathcal{T}\!+\!\beta\mathcal{B}\mathcal{B}^*+\Sigma_{\varphi_{g}}}
   \le \|w^0\!-\!w^*\|_{\mathcal{H}_2}^2$.
  From \eqref{x-sPADMM}, it follows that
  \(
   \|x^1\!-\!x^0\|^2
  \le 4\tau^2\beta^2\big(\|\mathcal{A}^*(y^1\!-\!y^0)\|^2\!+\!\|\mathcal{A}^*(y^0\!-\!y^*)\|^2
       \!+\!\|\mathcal{B}^*(z^1\!-\!z^0)\|^2\!+\!\|\mathcal{B}^*(z^0\!-\!z^*)\|^2\big).
  \)
  Together with the upper estimations for
  $\|y^1\!-\!y^0\|^2_{\mathcal{S}\!+\Sigma_{\vartheta_{\!f}}\!+\!\beta\mathcal{A}\mathcal{A}^*}$
  and $\|z^1\!-\!z^0\|^2_{\mathcal{T}\!+\!\beta\mathcal{B}\mathcal{B}^*+\Sigma_{\varphi_{g}}}$,
  a simple calculation yields that $(\tau\beta)^{-1}\|x^1\!-\!x^0\|^2\le \|w^0\!-\!w^*\|_{\mathcal{H}_3}^2$.
  Thus, for any $w^*\in T^{-1}(0)$,
  \begin{subnumcases}{}\label{ypart}
   \|y^1\!-\!y^0\|^2_{\mathcal{S}\!+\Sigma_{\vartheta_{\!f}}\!+\!\beta\mathcal{A}\mathcal{A}^*}
   \le \|w^0\!-\!w^*\|_{\mathcal{H}_1}^2,\nonumber\\
   \label{zpart}
   \|z^1\!-\!z^0\|^2_{\mathcal{T}\!+\!\beta\mathcal{B}\mathcal{B}^*+\Sigma_{\varphi_{g}}}
   \le \|w^0\!-\!w^*\|_{\mathcal{H}_2}^2,\nonumber\\
   \label{xpart}
   (\tau\beta)^{-1}\|x^1\!-\!x^0\|^2\le \|w^0\!-\!w^*\|_{\mathcal{H}_3}^2.\nonumber
   \end{subnumcases}
  By the arbitrariness of $w^*$, we obtain the desired result.
 \end{proof}

  Now we are in a position to establish the upper estimation for $d_1$ and $\eta_1$.
%------------------------------------------------------------------------------------------------
 \begin{proposition}\label{estimation-d1-eta1}
  For $d_1$ and $\eta_1$ in equation \eqref{epskbound}-\eqref{KKT-bound}, the following inequalities hold:
  \begin{align*}
   d_1 \le\frac{1}{2}\sum_{i=1}^3{\rm dist}_{\mathcal{H}_i}^2(w^0,T^{-1}(0))\ \ {\rm and}\ \
   \eta_{1}\le\max\Big(\frac{1}{2\tau^2},2\Big)\sum_{i=1}^3{\rm dist}_{\mathcal{H}_i}^2(w^0,T^{-1}(0)).
  \end{align*}
%  where $\mathcal{H}=\mathcal{H}_1+\mathcal{H}_2+\mathcal{H}_3$
%  and $\mathcal{H}'=\mathcal{H}_1+4\mathcal{H}_2+\tau^{-1}\mathcal{H}_3$.
  \end{proposition}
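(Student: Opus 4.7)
The plan is to split both $d_1=D_{\mathcal{M}}(w^1,w^0)$ and $\eta_1=\widetilde{\eta}_1+\tfrac14\|w^1-w^0\|_\Sigma^2$ along the block structure $(y,z,x)$ of $\mathbb{H}$ and then apply the three pointwise bounds supplied by Lemma \ref{distance-x1x0} to each block separately.

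For $d_1$, I would exploit the fact that $\mathcal{M}$ is block diagonal to write
\[
  2d_1=\|y^1-y^0\|_{\mathcal{S}+\Sigma_{\vartheta_{\!f}}}^2
       +\|z^1-z^0\|_{\mathcal{T}+\beta\mathcal{B}\mathcal{B}^*+\Sigma_{\varphi_g}}^2
       +(\tau\beta)^{-1}\|x^1-x^0\|^2.
\]
The first summand is dominated by $\|y^1-y^0\|_{\mathcal{S}+\Sigma_{\vartheta_{\!f}}+\beta\mathcal{A}\mathcal{A}^*}^2$ (adding a nonnegative term), so the three summands are bounded by $\mathrm{dist}_{\mathcal{H}_1}^2$, $\mathrm{dist}_{\mathcal{H}_2}^2$, and $\mathrm{dist}_{\mathcal{H}_3}^2$ respectively, directly by Lemma \ref{distance-x1x0}. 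Summing gives the first claim.

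For $\eta_1$ I would first use the $x$-update \eqref{x-sPADMM} at $k=0$, namely $\mathcal{A}^*y^1+\mathcal{B}^*z^1-c=(\tau\beta)^{-1}(x^1-x^0)$, to rewrite the feasibility term as $\tfrac{\sigma_{\!\tau}-(\tau-1)^2}{2\tau^2}(\tau\beta)^{-1}\|x^1-x^0\|^2$, whose coefficient is bounded by $1/(2\tau^2)$ since $\sigma_{\!\tau}\in(0,1)$. Then I would group the remaining contributions by block. The $y$-block coefficients are $\tfrac{\sigma_{\!\tau}}2$ on $\mathcal{S}$ and $\tfrac14$ on $\Sigma_{\vartheta_{\!f}}$, both $\le\tfrac12$, so the $y$-block is at most $\tfrac12\|y^1-y^0\|_{\mathcal{S}+\Sigma_{\vartheta_{\!f}}+\beta\mathcal{A}\mathcal{A}^*}^2\le\tfrac12\mathrm{dist}_{\mathcal{H}_1}^2$. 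The $z$-block coefficients are $\tfrac{\nu_{\!\tau}}{2\tau}$ on $\mathcal{T}$ and $\tfrac{\sigma_{\!\tau}+\tau-1}{\tau}+\tfrac14$ on $\Sigma_{\varphi_g}$; using $\sigma_{\!\tau}<1$ one checks $\nu_{\!\tau}=\sigma_{\!\tau}(\tau+1)+\tau-1\le 2\tau$ and $\sigma_{\!\tau}+\tau-1\le\tau$, so both coefficients are at most $5/4\le 2$, and the $z$-block is bounded by $2\|z^1-z^0\|_{\mathcal{T}+\beta\mathcal{B}\mathcal{B}^*+\Sigma_{\varphi_g}}^2\le 2\mathrm{dist}_{\mathcal{H}_2}^2$. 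Combining with the $x$-block bound yields $\eta_1\le\max(1/(2\tau^2),2)\sum_{i=1}^3\mathrm{dist}_{\mathcal{H}_i}^2(w^0,T^{-1}(0))$.

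The only mildly delicate point is the $z$-block bookkeeping: one must verify that the $\tau$-dependent coefficients $\nu_{\!\tau}/(2\tau)$ and $(\sigma_{\!\tau}+\tau-1)/\tau+1/4$ remain uniformly $\le 2$ on the whole range $\tau\in(0,(1+\sqrt5)/2)$, which follows from the universal inequality $\sigma_{\!\tau}<1$ stated in \eqref{sigma-h1h2-equa1}. Everything else is arithmetic: applying Lemma \ref{distance-x1x0} to the appropriate block and taking the maximum of the three coefficients.
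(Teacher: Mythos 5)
Your proposal is correct and follows essentially the same route as the paper: decompose $d_1$ and $\eta_1$ along the $(y,z,x)$ block structure, bound each $\tau$-dependent coefficient using $\sigma_{\!\tau}\in(0,1)$ (via \eqref{sigma-h1h2-equa1}) and the $x$-update identity $x^1-x^0=\tau\beta(\mathcal{A}^*y^1+\mathcal{B}^*z^1-c)$, and then invoke the three block estimates of Lemma \ref{distance-x1x0}. Your bookkeeping is in fact slightly more careful than the paper's (you distribute the $\tfrac14\|w^1-w^0\|_\Sigma^2$ term into the $y$- and $z$-blocks explicitly and verify the coefficient bounds $\tfrac{\nu_{\!\tau}}{2\tau}<1$ and $\tfrac{\sigma_{\!\tau}+\tau-1}{\tau}+\tfrac14\le\tfrac54$), but the argument is the same.
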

  \begin{proof}
  By the definition of $d_1$ and Lemma \ref{distance-x1x0}, we have the following inequality
  \begin{align*}
   d_1=D_{\mathcal{M}}({w}^0,w^1)
   &=\frac{1}{2}\|y^1\!-\!y^0\|_{\mathcal{S}+\Sigma_{\vartheta_{\!f}}}^2
    +\frac{1}{2}\|z^1\!-\!z^0\|^2_{\mathcal{T}+\Sigma_{\varphi_{g}}+\beta\mathcal{B}\mathcal{B}^*}
    +\frac{1}{2\tau\beta}\|x^1\!-\!x^0\|^2\\
   &\le \frac{1}{2}\sum_{i=1}^3{\rm dist}_{\mathcal{H}_i}^2(w^0,T^{-1}(0)).
  \end{align*}
  From $x^1-x^0=\tau\beta(\mathcal{A}^*y^{1}\!+\!\mathcal{B}^*z^{1}\!-\!c)$,
  the definition of $\eta_1$ and Lemma \ref{distance-x1x0}, it follows that
  \begin{align*}
   \eta_{1}
   &=\frac{(\sigma\!-\!(\tau\!-\!1)^2)}{2\tau^3\beta}\|x^1\!-\!x^0\|^2+\frac{\sigma}{2}\|y^1\!-\!y^0\|^2_{\mathcal{S}}
              +\frac{h(\tau,\sigma)}{2}\|z^{1}\!-\!z^{0}\|^2_{\mathcal{T}+\beta\mathcal{B}^*\mathcal{B}}
               +\frac{1}{4}\|w^1\!-\!w^0\|^2_{\Sigma}\nonumber\\
   &\le \frac{1}{2\tau^3\beta}\|x^1\!-\!x^0\|^2+\frac{1}{2}\|y^1\!-\!y^0\|^2_{\mathcal{S}}
              +\tau\|z^{1}\!-\!z^{0}\|^2_{\mathcal{T}+\beta\mathcal{B}^*\mathcal{B}}
               +\frac{1}{4}\|w^1\!-\!w^0\|^2_{\Sigma}\\
   &\leq \frac{1}{2\tau^2}{\rm dist}_{\mathcal{H}_3}^2(w^0,T^{-1}(0))+\frac{1}{2}{\rm dist}_{\mathcal{H}_1}^2(w^0,T^{-1}(0))
         +2{\rm dist}_{\mathcal{H}_2}^2(w^0,T^{-1}(0)),\\
   &\le\max\Big(\frac{1}{2\tau^2},2\Big)\sum_{i=1}^3{\rm dist}_{\mathcal{H}_i}^2(w^0,T^{-1}(0)).
  \end{align*}
  The last two equations give the desired results. The proof is completed.
  \end{proof}

  Proposition \ref{estimation-d1-eta1} shows that $d_1$ and $\eta_1$ can be controlled
  by the distance from the initial point $w^0$ to the solution set $T^{-1}(0)$.
  The derivation of this upper estimation does not require $\mathcal{S}$ and $\mathcal{T}$
  to be such that $\mathcal{S}\!+\!\Sigma_{\vartheta_{\!f}}\!+\!\beta\mathcal{A}\mathcal{A}^*$
  and $\mathcal{T}\!+\!\Sigma_{\varphi_{g}}\!+\!\beta\mathcal{B}\mathcal{B}^*$ are positive
  definite.

 \bigskip
 \noindent
 {\large\bf Acknowledgements.} The authors would like to thank Professor Defeng Sun
 from National University of Singapore for the discussions on
 the weighted iteration complexity of the semi-proximal ADMM and his helpful comments
 on the revision of this paper.

 \end{document}